\newtheorem{theorem}{Theorem}
\newtheorem{lemma}{Lemma}
\newtheorem{corollary}{Corollary}
\newtheorem{proposition}{Proposition}
\theoremstyle{remark}
\newtheorem{remark}{Remark}
\def\N{\mathbb{N}}
\def\R{\mathbb{R}}
\def\P{\mathbb{P}}
\def\E{\mathbb{E}}
\renewcommand{\phi}{\varphi}
\renewcommand{\epsilon}{\varepsilon}
\newcommand{\1}{{\text{\Large $\mathfrak 1$}}}
\renewcommand{\limsup}{\varlimsup}
\renewcommand{\liminf}{\varliminf}
\definecolor{mygray}{gray}{0.9}
\definecolor{deeppink}{RGB}{255,20,147}
\definecolor{mygreen}{rgb}{0.05, 0.576, 0.03}
\definecolor{myred}{rgb}{0.768, 0.09, 0.09}
\long\def\symbolfootnote[#1]#2{\begingroup
\def\thefootnote{\fnsymbol{footnote}}\footnote[#1]{#2}\endgroup}
\newcommand{\keywords}[1]{ \noindent {\footnotesize
             {\small \em Keywords and phrases.} {\sc #1} } }
\newcommand{\ams}[2]{  \noindent {\footnotesize
             {\small \em AMS {\rm 2010} subject classification.
             {\rm Primary {\sc #1}; secondary {\sc #2}} } } }
\def\A{\mathsf A}
\def\B{\mathsf B}
\renewcommand{\S}{\mathsf S}
\begin{document}


\title{\Large \bf Laplace transform asymptotics
and large deviation principles for longest success runs in Bernoulli trials}

\author{{\sc Takis Konstantopoulos\thanks{{\tt takiskonst@gmail.com};
Department of Mathematics, Uppsala University, SE-751 06 Uppsala, Sweden;
the work of this author was supported by Swedish Research Council
grant 2013-4688}}
\and
{\sc Zhenxia Liu\thanks{{\tt zhenxia.liu@hotmail.com};
Department of Mathematics,
Link\"{o}ping University, SE-581 83 Link\"{o}ping, Sweden}}
\and
{\sc Xiangfeng Yang\thanks{{\tt xiangfeng.yang@liu.se};
Department of Mathematics,
Link\"{o}ping University, SE-581 83 Link\"{o}ping, Sweden}}}

\date{10 July 2015}
\maketitle

\begin{abstract}
The longest stretch $L(n)$ of consecutive heads in $n$ i.i.d.\ coin tosses
is seen from the prism of large deviations.
We first establish precise asymptotics for the moment generating function
of $L(n)$ and then show that there are precisely two large deviation
principles, one concerning the behavior of the distribution of $L(n)$
near its nominal value $\log_{1/p} n$ and one away from it.
We discuss applications to inference and to logarithmic
asymptotics of functionals of $L(n)$.

\vspace*{2mm}
\ams{60F10,	
44A10,		
60G50		
}
{60G70,		
62F25   	
}
\\[-2mm]
\keywords{Large deviation principle, rate function, Fenchel-Legendre
transform, Laplace transform, moment generating function, runs,  longest run,
Bernoulli trial, confidence interval}
\end{abstract}

\section{Introduction}
The earliest reference to the longest stretch of consecutive successes
in ``random'' trials is (as we learn in the 1981 English translation
\cite[p.\ 138]{VM} of the 1928 book of von Mises)
in a 1916 paper of the German philosopher Karl Marbe
and concerns the longest stretch of consecutive births of children of
the same sex as appearing in the birth register of a Bavarian town.
(This was actually
used by parents to ``predict'' the sex of their child.)
The longest stretch of same-sex births in 200 thousand
birth registrations was actually $17 \approx \log_2 (200\times 10^3)$.
Von Mises \cite{VM1921} was apparently the first one to study
the problem rigorously and his result can be seen in Feller's
Volume I \cite[Section XIII12]{FEL68}.

If $X_1,X_2, \ldots$ are i.i.d.\ Bernoulli trials,
$\P(X_i = 1) = p$, $\P(X_i=0)= q:=1-p$,
and if $L(n)$ is the largest $\ell$ such that
$X_{i+1} + \cdots + X_{i+\ell}=\ell$ for some $0 \le i \le n-\ell$,
then we call the base-$1/p$ logarithm $\log_{1/p} n$ of $n$ the
{\em nominal value} of $L(n)$ because, as Erd\H{o}s and R\'enyi \cite{ER70}
show (in a more general setup in fact; see also
\cite{Erdos-Revesz-1975} and \cite{Renyi-1970}),
\begin{equation}
\label{Lnom}
\lim_{n \to \infty} \frac{L(n)}{\log_{1/p} n} = 1, \quad \text{a.s.}
\end{equation}
The distribution of $L(n)$ is not explicit. Yet, there are many estimates.
The literature is littered with them and one of us recently contributed to it
in \cite{Holst-Konstantopoulos-2015} (where
other quantities, such as the number of times that longest or shortest runs
occur, are also explored).

Our principal interest in this paper is to see to what extent large deviations
theory can be applied to the problem of squeezing something useful
about the distribution of $L(n)$.
We first establish logarithmic asymptotics for the
moment generating function $\E e^{\lambda L(n)}$, as $n \to \infty$.
The asymptotics split in three parts: the subcritical regime,
$\lambda < \ln (1/p)$, the supercritical regime,
$\lambda > \ln (1/p)$, and the critical one when $\lambda=\ln (1/p)$.
These asymptotics can be used in combination with the
G\"artner-Ellis theorem (but see Remark \ref{rem2} below)
to derive a full large deviations principle (LDP).
There are precisely two LDPs. One concerning
the behavior of the distribution of $L(n)$ near its nominal value
$\log_{1/p} n$ and another far away from it.

We outline the results below. Our starting point is
asymptotics for the moment generating function and this is what we do
right away.
Note that we use $\ln$ for natural logarithm and $\log_b$ for logarithm
with base $b$. The symbol $a_n \sim b_n$ means $a_n/b_n \to 1$ as $n
\to \infty$.
Note also that we use the term ``Laplace transform'' interchangeably
with the term ``moment generating function''.
(The variable $\lambda$ ranges over the whole real line.)
\begin{theorem}\label{proposition:Laplace-transform}
The moment generating function of $L(n)$ has the following asymptotics:\\
\noindent (i) Subcritical regime: for $\lambda< \ln(1/p),$
\begin{equation*}
\ln\,\mathbb{E}\exp\left\{\lambda\, L(n)\right\}\sim \lambda
\,\log_{1/p}n;\\
\end{equation*}
\noindent (ii) Critical regime: for $\lambda= \ln(1/p),$
\begin{equation*}
\ln\,\mathbb{E}\exp\left\{\lambda\, L(n)\right\}\sim 2\lambda
\,\log_{1/p}n;\\
\end{equation*}
\noindent (iii) Supercritical regime: for $\lambda> \ln(1/p),$
\begin{equation*}
\ln\,\mathbb{E}\exp\left\{\lambda\, L(n)\right\}\sim
(\lambda-\ln(1/p))\, n.
\end{equation*}
\end{theorem}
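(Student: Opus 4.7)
The plan is to prove each of the three regimes separately by combining elementary two-sided estimates on $\P(L(n)\geq k)$ with a Laplace-type split of $\E e^{\lambda L(n)}=\sum_{k} e^{\lambda k}\P(L(n)=k)$. Three probabilistic inputs carry the argument: the union bound $\P(L(n)\geq k)\leq(n-k+1)p^k$; a second-moment (Paley--Zygmund) lower bound for the indicator count $N_k=\sum_{i=1}^{n-k+1}\1\{X_i=\cdots=X_{i+k-1}=1\}$, which has mean $(n-k+1)p^k$ and, by separating the diagonal, the $1\leq|i-j|<k$, and the $|i-j|\geq k$ contributions to $\E[N_k^2]$, variance dominated by the mean once $k\gtrsim\log_{1/p} n$, so that $\P(L(n)\geq k)\geq c(n-k+1)p^k$ in that range; and the disjoint-block bound $\P(L(n)<k)\leq(1-p^k)^{\lfloor n/k\rfloor}$, whose super-polynomial decay below $\log_{1/p} n$ also yields $\E L(n)\sim\log_{1/p} n$.

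In the subcritical case $\lambda<\ln(1/p)$, the key is that $pe^\lambda<1$. Splitting the sum at $k_*=\lceil(1+\epsilon)\log_{1/p} n\rceil$, the head contributes at most $e^{\lambda k_*}$ (for $\lambda\geq 0$; the case $\lambda<0$ is symmetric, using the disjoint-block bound to dispatch the event $L(n)<(1-\epsilon)\log_{1/p} n$), and the union-bound tail is a convergent geometric series $n\sum_{k>k_*}(pe^\lambda)^k$ of the same order $e^{\lambda k_*}$; sending $\epsilon\downarrow 0$ gives $\limsup\ln\E e^{\lambda L(n)}/\log_{1/p} n\leq\lambda$. The matching lower bound is immediate from Jensen's inequality, $\ln\E e^{\lambda L(n)}\geq\lambda\E L(n)\sim\lambda\log_{1/p} n$.

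In the supercritical case $\lambda>\ln(1/p)$, now $pe^\lambda>1$, so the union-bound sum is dominated by its top term, $\E e^{\lambda L(n)}\leq n\sum_{k=0}^n(pe^\lambda)^k=O(n\,e^{(\lambda-\ln(1/p))n})$; the matching lower bound is the single configuration $L(n)=n$, giving $\E e^{\lambda L(n)}\geq e^{\lambda n}p^n=e^{(\lambda-\ln(1/p))n}$.

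The delicate case, and the main obstacle, is the critical one $\lambda=\ln(1/p)$, where $pe^\lambda=1$ and the geometric sum degenerates. Summation by parts rewrites $\E(1/p)^{L(n)}=1+q\sum_{k=1}^{n}p^{-k}\P(L(n)\geq k)$, and the union bound gives $p^{-k}\P(L(n)\geq k)\leq\min(p^{-k},n)$; splitting at $\log_{1/p} n$ yields a sum of order $n+n(n-\log_{1/p} n)=\Theta(n^2)$, so $\ln\E\leq 2\ln n+O(1)$. For the matching lower bound, care is required: the cruder disjoint-block bound $\P(L(n)\geq k)\gtrsim np^k/k$ would only produce $\Theta(n\log n)$ and a prefactor of $1$ rather than $2$, so one must invoke the sharper second-moment estimate $\P(L(n)\geq k)\geq c(n-k+1)p^k$ (valid for $k\geq(1+\delta)\log_{1/p} n$), which has no $1/k$ loss; this gives $q\sum_{k\geq(1+\delta)\log_{1/p} n}(n-k+1)=\Theta(n^2)$ and hence $\ln\E\geq 2\ln n-O(1)$, matching the upper bound since $2\ln n=2\lambda\log_{1/p} n$.
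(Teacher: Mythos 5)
Your proof is correct, and although its skeleton --- two-sided polynomial estimates on $\P(L(n)\ge k)$ combined with a split of the Laplace sum around the nominal value $\log_{1/p}n$ --- is the same as the paper's, the ingredients and the bookkeeping are genuinely different. The paper's engine is the single two-sided bound $(1-p^k)^{n-k+1}\le\P(L(n)<k)\le(1-qp^k)^{n-k+1}$ of Lemma~\ref{Lbounds}, whose lower half comes from a correlation inequality and whose upper half from a telescoping renewal identity; you replace the former by the disjoint-block bound $\P(L(n)<k)\le(1-p^k)^{\lfloor n/k\rfloor}$ and the latter by a Paley--Zygmund second-moment argument on the run count $N_k$, which, after separating the diagonal, overlapping and disjoint pairs, delivers the matching lower bound $\P(L(n)\ge k)\ge c\,(n-k+1)p^k$ precisely in the range $k\gtrsim\log_{1/p}n$ where it is needed. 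Your organization also differs: the paper decomposes $\E e^{\lambda L(n)}$ over slabs $1+k\epsilon<L(n)/\log_{1/p}n\le 1+(k+1)\epsilon$ and sends $\epsilon\downarrow 0$ at the end, whereas you work with the exact sum $\sum_k e^{\lambda k}\P(L(n)=k)$ and, in the critical case, the Abel-summed identity $\E(1/p)^{L(n)}=1+q\sum_{k=1}^{n}p^{-k}\P(L(n)\ge k)$, which makes the $\Theta(n^2)$ size of the critical Laplace transform --- hence the factor $2$ --- transparent and avoids the paper's more delicate computation with the truncation index $N_n$. Your lower bounds are simpler as well: Jensen plus $\E L(n)\sim\log_{1/p}n$ handles both signs of $\lambda$ at once in the subcritical regime (the paper instead restricts to the high-probability event $\{L(n)\ge(1-\epsilon)\log_{1/p}n\}$), and the single all-heads configuration $\P(L(n)=n)=p^n$ replaces Lemma~\ref{lemma:a-LD-probability} in the supercritical lower bound. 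What the paper's sharper Lemma~\ref{Lbounds} buys in exchange is reusability: it also drives the large deviation principles of Section~\ref{LDPsec}, whereas your cruder blockwise bound would need the second-moment supplement there too. Your remark that the naive $\lfloor n/k\rfloor p^k$ lower bound loses a factor $1/k$ and would produce the wrong critical prefactor is exactly the right diagnosis of where the difficulty of part (ii) lies.
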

To the best of the authors' knowledge, the asymptotics on the
moment generating function in Theorem \ref{proposition:Laplace-transform} have
not explicitly appeared in the literature.
To show Theorem \ref{proposition:Laplace-transform}
there are several options. One option is the use of
the recursion formula
\[
\mathbb{E}\exp\left\{\lambda\,
L(n)\right\}=q\sum_{j=0}^{n-1}p^j\mathbb{E}\exp\left(\lambda\,
\max\{L(n-j-1),j\}\right)+p^ne^{\lambda n},
\]
appearing in \cite{Holst-Konstantopoulos-2015}.
Another possible option is to use Fibonacci-type polynomials,
as appearing in the combinatorially-derived expressions for the
moment generating function in \cite{Philippou-1985}.
But the simplest method is a good estimate for the distribution of
$L(n)$; see Lemma \ref{Lbounds}.
Why this lemma works to establish the asymptotics in the subcritical
and critical regimes is the subject of Section \ref{LTA}
(Lemmas \ref{lemma:less} and \ref{lemma:june-1}).

One implication of Theorem \ref{proposition:Laplace-transform}
is that it immediately suggests the form of large deviations of $L(n)$. In
\cite{Fu-Wang-Lou-2003}, a large deviations type probability was established in
the following form
\begin{align}
\label{LDP-from-Fu-Wang-Lou-2003}
\lim_{n\rightarrow\infty}\frac{1}{n}\,\ln \P(L(n)<k)=-\beta,
\end{align}
for a fixed $k$ where $\beta$ is positive constant. Since, however,
$\log_{1/p} n$ is the nominal value of $L(n)$, in the sense that
\eqref{Lnom} holds, the limit \eqref{LDP-from-Fu-Wang-Lou-2003}
is not strictly speaking a result
in the theory of large deviations since it is not about the deviation from the
most probable point $\log_{1/p} n$ of the random variables $L(n)$.
A partial answer was recently included in \cite{Mao-Wang-Wu-2015} who
proved that
\begin{align}
\label{temp-05-28}
\lim_{n\rightarrow\infty}\frac{1}{\log_{1/p}n}\,
\ln\,\P\left(\frac{L(n)}{\log_{1/p}n}\geq1+x\right)=-x\ln(1/p),
\quad x>0.
\end{align}
Despite that the research on head runs is a classical topic with many
applications (see for instance \cite{Balakrishnan-Koutras-2002}),
no explicit general large deviations principles can be
found in the literature.

The subcritical asymptotics of Theorem \ref{proposition:Laplace-transform}
corresponds to the
convergence $L(n)/\log_{1/p}n\rightarrow1$ almost surely as
$n\rightarrow\infty.$
Therefore we can study the large deviations on
$L(n)/\log_{1/p}n.$ Let us first define the function $\Lambda^*(x)$ as
\begin{equation}
\label{rate-function}
\Lambda^*(x)=
\begin{cases}
	+\infty,& x<1,\\
	(x-1)\ln(1/p),& x \ge 1.
\end{cases}
\end{equation}
Notice that $\Lambda^*$ is lower semicontinuous with
$\{x\in \R:\, \Lambda^*(x) \le c\}$ compact for all $c \ge 0$.
This means that $\Lambda^*$ is a {\em good} rate function (in the terminology
of \cite{Dembo-Zeitouni-2009}).
Our references to large deviations theory are
Dembo and Zeitouni \cite{Dembo-Zeitouni-2009}
and Wentzell \cite{Wentzell-1990}.
The following full LDP is obtained as a corollary to
Theorem \ref{proposition:Laplace-transform}.
\begin{corollary}[LDP near the nominal value]
\label{th:longest-head-run}
The normalized longest head run $L(n)/\log_{1/p}n$ satisfies a large deviation
principle with a good rate function $\Lambda^*(x)$ given by
\eqref{rate-function} and speed $\log_{1/p}n$.
Namely, \\
\noindent (i) for any open set $O\subset \mathbb{R},$
\begin{align}
\label{LDP-lower-bound}
\liminf_{n\rightarrow
\infty}\,\,\frac{1}{\log_{1/p}n}\,\,\ln\,\,\mathbb{P}\left(\frac{L(n)}{\log_{1/p}n}\in
O\right)
 \geq-\inf_{x\in O}\Lambda^*(x);
\end{align}
\noindent (ii) for any closed set $F\subset \mathbb{R},$
\begin{align}
\label{LDP-upper-bound}
\limsup_{n\rightarrow
\infty}\,\,\frac{1}{\log_{1/p}n}\,\,\ln\,\,\mathbb{P}\left(\frac{L(n)}{\log_{1/p}n}\in
F\right)
 \leq-\inf_{x\in F}\Lambda^*(x).
\end{align}
\end{corollary}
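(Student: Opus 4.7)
The plan is to rely on the Gärtner–Ellis theorem where its hypotheses are met, and to supply the remaining piece by hand. First I would identify the scaled limiting logarithmic moment generating function
\[
\Lambda(\lambda) := \lim_{n\to\infty}\frac{1}{\log_{1/p} n}\,\ln \E e^{\lambda L(n)}.
\]
Theorem \ref{proposition:Laplace-transform} immediately yields $\Lambda(\lambda) = \lambda$ for $\lambda < \ln(1/p)$ and $\Lambda(\lambda) = +\infty$ for $\lambda > \ln(1/p)$ (the supercritical formula scales as $n/\log_{1/p}n$, which blows up). A short Fenchel–Legendre computation then recovers the function $\Lambda^*$ of \eqref{rate-function}: the supremum of $\lambda x - \Lambda(\lambda)$ is attained as $\lambda \uparrow \ln(1/p)$ when $x \ge 1$, and equals $+\infty$ when $x < 1$.

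For the upper bound \eqref{LDP-upper-bound} I would invoke the upper half of the Gärtner–Ellis theorem, which requires only the existence of $\Lambda$ and no smoothness hypothesis. Exponential tightness, needed to lift the bound from compact to closed subsets of $\R$, follows from a single Chebyshev estimate: for fixed $\lambda \in (0, \ln(1/p))$ and any $M > 1$,
\[
\P\!\left(\frac{L(n)}{\log_{1/p} n} \ge M\right) \le e^{-\lambda M \log_{1/p} n}\, \E e^{\lambda L(n)},
\]
whose logarithm, divided by $\log_{1/p}n$, tends to $-(M-1)\lambda$ by Theorem \ref{proposition:Laplace-transform}(i) and hence to $-\infty$ as $M \to \infty$; the lower tail is automatic because $L(n) \ge 0$.

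The lower bound \eqref{LDP-lower-bound} is the delicate part, because $\Lambda$ is not essentially smooth at $\lambda = \ln(1/p)$ (it is linear on its effective domain and then jumps to $+\infty$), so Gärtner–Ellis cannot supply it directly; this is presumably what Remark \ref{rem2} warns about. I would argue by hand. For an open set $O$, three cases suffice: if $1 \in O$, then \eqref{Lnom} gives $\P(L(n)/\log_{1/p}n \in O) \to 1$, and the required liminf is bounded below by $0 = -\inf_O \Lambda^*$; if $O \cap [1,\infty) = \emptyset$, then $\inf_O \Lambda^* = +\infty$ and nothing need be shown; otherwise I pick $1 < a < b$ with $(a,b) \subset O$ and invoke the disjoint-block lower estimate $\P(L(n) \ge k) \ge 1 - (1-p^k)^{\lfloor n/k\rfloor}$ (part of Lemma \ref{Lbounds}) at $k = \lceil a \log_{1/p} n\rceil$. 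Since $np^k$ is of order $n^{1-a} \to 0$, this produces $\P(L(n)\ge k)$ of order $np^k/k \asymp n^{1-a}/\log n$, whereas the already-established upper bound at $b$ decays like $n^{1-b+o(1)}$; subtracting gives $\P(L(n)/\log_{1/p}n \in (a,b))$ of order $n^{1-a}/\log n$, whence a liminf of $-(a-1)\ln(1/p) = -\Lambda^*(a)$. Letting $a \downarrow \inf(O\cap[1,\infty))$ closes the gap. The main obstacle throughout is exactly this failure of essential smoothness; it is sidestepped by the explicit two-sided estimates of Lemma \ref{Lbounds}, which pin down the logarithmic order of the probability at every supercritical scale.
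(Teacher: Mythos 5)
Your proof is correct and structurally the same as the paper's: the upper bound comes from the upper half of the G\"{a}rtner--Ellis theorem (the paper simply cites Section 2.3 of Dembo--Zeitouni; your explicit exponential-tightness check via Chebyshev is a welcome extra detail and is exactly why the closed-set bound is available, since $0$ lies in the interior of the effective domain of $\Lambda$), and the lower bound is done by hand, reduced to small intervals above $1$ and written as a difference of two tail probabilities in which the tail at the larger threshold is shown to be negligible. The only genuine difference is in how that negligibility is quantified: the paper invokes the logarithmic limits of Lemma \ref{lemma:borrowed-from-wu} (borrowed from Mao--Wang--Wu) together with the elementary inequality $\ln(u-v)\ge \ln u - v/(u-v)$, whereas you work directly with explicit polynomial orders ($n^{1-a}/\log n$ for the lower tail bound at $a$ versus $n^{1-b}$ for the upper tail bound at $b>a$), which is more self-contained --- and indeed the paper itself remarks that Lemma \ref{lemma:borrowed-from-wu} can be derived from Lemma \ref{Lbounds}. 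One small slip to fix: the bound $\P(L(n)\ge k)\ge 1-(1-p^k)^{\lfloor n/k\rfloor}$ is the disjoint-block estimate, which is \emph{not} what Lemma \ref{Lbounds} states; that lemma gives $\P(L(n)\ge k)\ge 1-(1-qp^k)^{\,n-k+1}$, which via $1-(1-x)^N\ge (N-1)x$ for small $x\ge 0$ yields the even cleaner order $q\,n^{1-a}$ and serves your purpose just as well. Either way the logarithmic rate is $-(a-1)\ln(1/p)$, so your argument goes through.
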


\begin{remark}
\label{rem1}
Evidently, the large deviation principle presented in Corollary
\ref{th:longest-head-run} generalizes the result \eqref{temp-05-28} in
\cite{Mao-Wang-Wu-2015}, which comes from choosing the open set $O=(1+x,\infty)$
and the closed set $F=[1+x,\infty).$
\end{remark}

\begin{remark}
\label{rem2}
[Connections with the G\"{a}rtner-Ellis theorem] The proof of the large
deviation upper bound \eqref{LDP-upper-bound} comes directly from the
G\"{a}rtner-Ellis theorem (cf.\ \cite{Dembo-Zeitouni-2009}). We note that the
rate function $\Lambda^*$ is the Fenchel-Legendre transform of the following
function
\begin{align*}
\Lambda(\lambda)=
\begin{cases}
+\infty,& \lambda>\ln(1/p),\\
2\lambda,& \lambda=\ln(1/p),\\
\lambda,& \lambda<\ln(1/p),\\
\end{cases}
\end{align*}
that is, $\Lambda^*(x)=\sup_{\lambda\in\mathbb{R}}\left[\lambda
x-\Lambda(\lambda)\right]$.
There is a slight catch here:
to establish the lower bound, the G\"{a}rtner-Ellis theorem requires that
the function $\Lambda$ be essentially smooth, namely,
that $\lim_{k\rightarrow\infty}|\Lambda'(\lambda_k)|=\infty$
as $\lambda_k\rightarrow \ln(1/p)$.
But this is not true here.
Therefore the G\"{a}rtner-Ellis theorem does
not cover our case. If instead we look at the lower bound proposed in the the
G\"{a}rtner-Ellis theorem, then we have for any open set $O,$
\begin{align*}
\liminf_{n\rightarrow
\infty}\,\,\frac{1}{\log_{1/p}n}\,\,\ln\,\,\mathbb{P}\left(\frac{L(n)}{\log_{1/p}n}\in
O\right)
 \geq-\inf_{x\in O\cap H}\Lambda^*(x),
\end{align*}
where $H$ is the so called \textit{set of {\em exposed points}}
\cite[Page 44]{Dembo-Zeitouni-2009} of $\Lambda^*.$ In our
case, it is easy to see that the set $H$ consists of only one point $H=\{1\}.$
So the proposed lower bound from the G\"{a}rtner-Ellis theorem becomes trivial
since
$$\inf_{x\in O\cap H}\Lambda^*(x)=\Lambda^*(1)=0.$$
In summary, our large deviation principle in Theorem \ref{th:longest-head-run}
gives a non-trivial example which the G\"{a}rtner-Ellis theorem does not cover.
\end{remark}

The supercritical regime of Theorem \ref{proposition:Laplace-transform}
gives another large deviation result with a good rate
function $\widetilde{\Lambda}^*(x)$ defined by
\begin{equation}
\label{rate-function-second}
\widetilde{\Lambda}^*(x)=
\begin{cases}
	+\infty,& x<0,	\\
	x\ln(1/p),& 0\le x\le 1,\\
	+\infty,& x>1.
\end{cases}
\end{equation}

\begin{corollary}[LDP away from the nominal value]
\label{th-second:longest-head-run}
The normalized longest head run $L(n)/n$ satisfies a large deviations principle
with a good rate function $\widetilde{\Lambda}^*(x)$ given by
\eqref{rate-function-second} and  speed $n$. Namely,
\\
\noindent (i) for any open set $O\subset \mathbb{R},$
\begin{align*} 
\liminf_{n\rightarrow
\infty}\,\frac{1}{n}\,\ln\,\mathbb{P}\left(\frac{L(n)}{n}\in O\right)
 \geq-\inf_{x\in O}\widetilde{\Lambda}^*(x);
\end{align*}
\noindent (ii) for any closed set $F\subset \mathbb{R},$
\begin{align*}
\limsup_{n\rightarrow
\infty}\,\frac{1}{n}\,\ln\,\mathbb{P}\left(\frac{L(n)}{n}\in F\right)
 \leq-\inf_{x\in F}\widetilde{\Lambda}^*(x).
\end{align*}
\end{corollary}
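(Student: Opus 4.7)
The plan is to mirror the proof of Corollary \ref{th:longest-head-run}: read off the limiting scaled cumulant generating function from Theorem \ref{proposition:Laplace-transform}, identify it as the convex dual of $\widetilde{\Lambda}^*$, apply the G\"{a}rtner-Ellis theorem for the upper bound, and supply the lower bound by a direct construction where G\"{a}rtner-Ellis breaks down.

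First I would compute, using all three parts of Theorem \ref{proposition:Laplace-transform},
\[
\widetilde{\Lambda}(\lambda) := \lim_{n \to \infty} \frac{1}{n} \ln \E e^{\lambda L(n)} = \max\{0,\ \lambda - \ln(1/p)\}, \qquad \lambda \in \R,
\]
the subcritical and critical asymptotics being of order $\log_{1/p} n = o(n)$. A short piecewise-linear calculation then identifies $\widetilde{\Lambda}^*$ of \eqref{rate-function-second} as the Fenchel-Legendre transform of $\widetilde{\Lambda}$. Since $\widetilde{\Lambda}$ is convex and finite on all of $\R$, the G\"{a}rtner-Ellis theorem yields the upper bound immediately.

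The main obstacle will be the lower bound. As in Remark \ref{rem2}, $\widetilde{\Lambda}$ is not essentially smooth (its derivative jumps from $0$ to $1$ at $\lambda = \ln(1/p)$), so the set of exposed points of $\widetilde{\Lambda}^*$ reduces to $\{0,1\}$ and the exposed-point version of G\"{a}rtner-Ellis gives nothing inside the interval $(0,1)$. I would therefore show directly that for every $x \in [0,1]$ and every $\delta > 0$,
\[
\liminf_{n \to \infty} \frac{1}{n} \ln \P\bigl(L(n)/n \in (x-\delta, x+\delta)\bigr) \ge - x \ln(1/p).
\]
The endpoints are handled by inspection: $x = 0$ is trivial by \eqref{Lnom}, and at $x = 1$ the event $\{X_1 = \cdots = X_n = 1\}$ has probability $p^n$. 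For $x \in (0,1)$, set $k = \lfloor xn \rfloor$ and consider
\[
A_n = \bigl\{X_1 = \cdots = X_k = 1,\ X_{k+1} = 0,\ L' \le k\bigr\},
\]
where $L'$ is the longest run of $1$'s in $X_{k+2},\ldots,X_n$. On $A_n$ we have $L(n) = k$ exactly, and by independence $\P(A_n) = p^k q\,\P(L(n-k-1) \le k)$. Since $k \sim xn$ grows linearly while $L(n-k-1)$ is of order $\log_{1/p} n$ by \eqref{Lnom}, the probability $\P(L(n-k-1) \le k)$ tends to $1$, giving $n^{-1} \ln \P(A_n) \to x \ln p = -x \ln(1/p)$; moreover $L(n) = k$ puts $L(n)/n$ in $(x-\delta, x+\delta)$ for $n$ large. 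Combined with the G\"{a}rtner-Ellis upper bound, this completes the LDP.
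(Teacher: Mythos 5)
Your proposal is correct, and the upper-bound half (G\"{a}rtner--Ellis applied to the cumulant $\widetilde{\Lambda}(\lambda)=\max\{0,\lambda-\ln(1/p)\}$, whose Fenchel--Legendre transform is \eqref{rate-function-second}) is exactly what the paper does. The paper, however, omits the details of the lower bound, saying only that it ``proceeds along the same lines'' as Corollary \ref{th:longest-head-run}; unpacking that, the paper's route is to write $\P(L(n)/n\in B_{y,\delta})=\P(L(n)/n>y-\delta)-\P(L(n)/n\ge y+\delta)$, apply the elementary inequality $\ln(a-b)\ge\ln a-\tfrac{b}{a-b}$, and feed in the two-sided tail asymptotics of Lemma \ref{lemma:a-LD-probability} to show that the subtracted term is exponentially negligible. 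Your lower bound takes a genuinely different and more constructive route: you exhibit the explicit cylinder event $A_n=\{X_1=\cdots=X_k=1,\ X_{k+1}=0,\ L'\le k\}$ with $k=\lfloor xn\rfloor$, on which $L(n)=k$ exactly, and compute $\P(A_n)=p^kq\,\P(L(n-k-1)\le k)\sim p^k q$ directly. This is self-contained (it needs only independence and the a.s.\ convergence \eqref{Lnom}, not Lemma \ref{lemma:a-LD-probability} or the $\ln(a-b)$ trick), and it avoids having to verify that the difference of two tail probabilities does not degenerate. The paper's route has the advantage of reusing machinery already set up for the first LDP. Your identification of the exposed points of $\widetilde{\Lambda}^*$ as $\{0,1\}$, and hence of why the G\"{a}rtner--Ellis lower bound is vacuous on $(0,1)$, is also correct and mirrors Remark \ref{rem2}. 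Both arguments are valid; yours is arguably the more elementary one for this particular corollary.
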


Another implication of Theorem \ref{proposition:Laplace-transform}
and its corollaries \ref{th:longest-head-run} and
\ref{th-second:longest-head-run}
is in obtaining asymptotics for other functionals of $L(n)$.
We summarize the results as follows.
\begin{corollary}
\label{th-third:general-Laplace}
(I)
If $f: \R_+ \to \R$ is continuous and satisfies one of the two conditions
\begin{align}
\lim_{m\rightarrow\infty}\limsup_{n\rightarrow
\infty}\,\,\frac{1}{\log_{1/p}n}\,\,\ln\,\,\mathbb{E}\left[\exp\left\{\log_{1/p}n\cdot
f(\frac{L(n)}{\log_{1/p}n})\right\}\cdot
\1_{\left\{f(\frac{L(n)}{\log_{1/p}n})\geq
m\right\}}\right]=-\infty,\label{A1}\tag{A.1}\\
\limsup_{n\rightarrow
\infty}\,\,\frac{1}{\log_{1/p}n}\,\,\ln\,\,\mathbb{E}\exp\left\{\log_{1/p}n\cdot\gamma\cdot
f(\frac{L(n)}{\log_{1/p}n})\right\}<\infty,\,\,\,\text{ for some
}\gamma>1,\label{A2}\tag{A.2}
\end{align}
then it holds that
\begin{align*}
\lim_{n\rightarrow
\infty}\,\,\frac{1}{\log_{1/p}n}\,\,\ln\,\,\mathbb{E}\exp\left\{\log_{1/p}n\cdot
f(\frac{L(n)}{\log_{1/p}n})\right\}
 =\max_{x\in\mathbb{R}}[f(x)-\Lambda^*(x)].
\end{align*}
(II)
If $g: \R_+ \to \R$ is continuous and satisfies one of the two conditions
\begin{align}
\lim_{m\rightarrow\infty}\limsup_{n\rightarrow
\infty}\,\,\frac{1}{n}\,\,\ln\,\,\mathbb{E}\left[\exp\left\{n\cdot
g(\frac{L(n)}{n})\right\}\cdot \1_{\left\{g(\frac{L(n)}{n})\geq
m\right\}}\right]=-\infty,\label{B1}\tag{B.1}\\
\limsup_{n\rightarrow
\infty}\,\,\frac{1}{n}\,\,\ln\,\,\mathbb{E}\exp\left\{n\cdot\gamma\cdot
g(\frac{L(n)}{n})\right\}<\infty,\,\,\,\text{ for some
}\gamma>1,\label{B2}\tag{B.2}
\end{align}
then it holds that
\begin{align*}
\lim_{n\rightarrow \infty}\,\,\frac{1}{n}\,\,\ln\,\,\mathbb{E}\exp\left\{n\cdot
g(\frac{L(n)}{n})\right\}
 =\max_{x\in\mathbb{R}}[g(x)-\widetilde{\Lambda}^*(x)].
\end{align*}
\end{corollary}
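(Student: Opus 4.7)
The plan is to recognize this corollary as a direct application of Varadhan's integral lemma (Theorem~4.3.1 in \cite{Dembo-Zeitouni-2009}) to the two LDPs already established. For part (I), I apply it to the sequence $L(n)/\log_{1/p}n$, whose LDP with good rate function $\Lambda^*$ and speed $\log_{1/p}n$ is exactly Corollary~\ref{th:longest-head-run}; for part (II), I apply it to $L(n)/n$, whose LDP with good rate function $\widetilde{\Lambda}^*$ and speed $n$ is Corollary~\ref{th-second:longest-head-run}. The conditions (A.1) and (B.1) are the standard tail condition in Varadhan's lemma, while (A.2) and (B.2) are the moment condition, so the two parts are structurally identical and I describe only part (I).

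For the lower bound, fix any $x_0$ with $\Lambda^*(x_0)<\infty$ and any $\delta>0$, and let $O=(x_0-\delta,x_0+\delta)$. Since $f$ is continuous,
\begin{equation*}
\frac{1}{\log_{1/p}n}\ln\E\exp\{\log_{1/p}n\cdot f(L(n)/\log_{1/p}n)\}
\geq \inf_{x\in O} f(x)+\frac{1}{\log_{1/p}n}\ln\P(L(n)/\log_{1/p}n\in O),
\end{equation*}
and the LDP lower bound \eqref{LDP-lower-bound} combined with letting $\delta\downarrow 0$ gives $\liminf\geq f(x_0)-\Lambda^*(x_0)$. Taking the supremum over $x_0$ yields the lower bound.

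For the upper bound, the idea is to split the expectation at the level $m$, so
\begin{equation*}
\E e^{\log_{1/p}n\cdot f(L(n)/\log_{1/p}n)}
\leq \E\bigl[e^{\log_{1/p}n\cdot f(L(n)/\log_{1/p}n)}\1_{\{f\leq m\}}\bigr]
+\E\bigl[e^{\log_{1/p}n\cdot f(L(n)/\log_{1/p}n)}\1_{\{f\geq m\}}\bigr].
\end{equation*}
For the first term, since $\{x:\Lambda^*(x)\leq c\}$ is compact and $f$ is continuous, a standard finite covering argument combined with the LDP upper bound \eqref{LDP-upper-bound} shows that its logarithmic $\log_{1/p}n$-normalized asymptotic is bounded by $\sup_x[f(x)\wedge m-\Lambda^*(x)]\leq \sup_x[f(x)-\Lambda^*(x)]$. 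Under (A.1), the second term contributes $-\infty$ once we let $m\to\infty$, and we are done. Under (A.2), I first observe that Hölder's inequality with exponents $\gamma$ and $\gamma/(\gamma-1)$ gives
\begin{equation*}
\E\bigl[e^{\log_{1/p}n\cdot f}\1_{\{f\geq m\}}\bigr]
\leq \bigl(\E e^{\log_{1/p}n\cdot\gamma f}\bigr)^{1/\gamma}\,
\P(f(L(n)/\log_{1/p}n)\geq m)^{(\gamma-1)/\gamma},
\end{equation*}
where the first factor has bounded logarithmic rate by hypothesis, and the second factor decays to $-\infty$ as $m\to\infty$ by the LDP upper bound applied to the closed set $\{x:f(x)\geq m\}$ (which eventually sits inside $\{\Lambda^*\geq c\}$ for any prescribed $c$, since $f$ is continuous and $\Lambda^*$ has compact level sets). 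Thus (A.2) implies (A.1) and the upper bound follows.

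The only step requiring any care is the reduction (A.2)$\Rightarrow$(A.1) via Hölder together with the compactness of sublevel sets of $\Lambda^*$; everything else is the textbook Varadhan argument. Note that $\Lambda^*$ has compact level sets in $\R$ (shown after \eqref{rate-function}), and $\widetilde{\Lambda}^*$ has compact level sets in $\R$ as well since it is infinite outside $[0,1]$, so Varadhan's lemma applies to both parts without further hypotheses on $f$ or $g$ beyond continuity and one of the two given conditions.
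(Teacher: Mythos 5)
Your proposal is correct and matches the paper's approach exactly: the paper proves this corollary in one line by invoking Varadhan's integral lemma (Section 4.3 of Dembo--Zeitouni) applied to the two LDPs of Corollaries \ref{th:longest-head-run} and \ref{th-second:longest-head-run}, which is precisely what you do, merely with the standard internal details of Varadhan's lemma (small-ball lower bound, truncation at level $m$, and the reduction of the moment condition to the tail condition) written out. Your H\"older-plus-compact-level-sets route for (A.2)$\Rightarrow$(A.1) is slightly more roundabout than the usual bound $\E[e^{\ell(n)f}\1_{\{f\ge m\}}]\le e^{-(\gamma-1)m\,\ell(n)}\,\E e^{\gamma \ell(n) f}$, but it is valid.
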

Here we
list several functions $f$ and $g$ for which
the conclusions of Corollary \ref{th-third:general-Laplace} hold.
The verification is included in Section
\ref{subsec:proof-cor-third}.
\begin{itemize}
\item $f(x)$ and $g(x)$ are continuous and bounded. In this case, (\ref{A1}),
(\ref{A2}), (\ref{B1}) and (\ref{B2}) hold.
\item $f(x)=c x^\alpha$, $x\in \mathbb{R}_+$, where $c>0$ and $0<\alpha<1$.
It is
proved in Section \ref{subsec:proof-cor-third} that (\ref{A1}) holds.
\item $g(x)$ satisfies the condition: there is $m>0$
such that if $|g(x)|\ge m$, then $x>1$.
For instance, with $c_1,c_2,c_3,c_4, \alpha$ positive constants,
the functions
\[
c_1 x^\alpha,\quad
c_2 e^{c_3 x^\alpha},\quad
c_4 \ln(x+\alpha)
\]
satisfy this condition.
Condition (\ref{B1}) is fulfilled for this type of functions  since
$\1_{\left\{g(\frac{L(n)}{n})\ge
m\right\}}\le \1_{\left\{\frac{L(n)}{n}>1\right\}}=0$.
\end{itemize}

Some easy conclusions of Theorem \ref{proposition:Laplace-transform}
concern well-known asymptotics for the moments of $L(n)$.
Formally taking a derivative at $\lambda=0$ of the expression in the subcritical
regime gives
\[
\E L(n)^k \sim (\log_{1/p} n)^k, \quad k \in \N.
\]
The asymptotic expressions of the first two moments can be found in
\cite{Schilling-1990}, and the higher order moments are discussed in
\cite[page 63]{Sandmann-Schonbucher-Sondermann-2002}.
For convenience, we include the
asymptotic mean as follows
\begin{align}
\label{june-10-mean}
\mathbb{E}L(n)=\log_{1/p}n+\log_{1/p}(1-p)+\log_{1/p}(e^\gamma)-\frac{1}{2}+\varepsilon(n)
\end{align}
where $\gamma=0.5772\ldots$ is Euler's constant, and $\varepsilon(n)$ is
``small''.

The rest of the paper is organized as follows. In Section \ref{LTA}
we prove Theorem {\ref{proposition:Laplace-transform},
along with some auxiliary results.
In Section \ref{LDPsec} we prove the large deviation principles,
stated in Corollaries \ref{th:longest-head-run}
and \ref{th-second:longest-head-run}.
Some other asymptotics related to Corollary \ref{th-third:general-Laplace}
are given in \ref{subsec:proof-cor-third}.
We discuss an application to inference in Section \ref{APPLsec}, and some
open problems in Section \ref{OPENsec}.
To save some space we use the abbreviation
\begin{gather*}
\ell(n) := \log_{1/p} n
\end{gather*}
whenever convenient.
As usual, we let
$\lfloor x\rfloor$ to be the largest integer $n$ such that $n \le x$
and $\lceil x\rceil$ to be the smallest integer $n$ such that $n \ge x$.

\section{Laplace transform asymptotics}
\label{LTA}
We obtain logarithmic asymptotics for $\E \exp\{\lambda\, L(n)\}$,
for all $\lambda \in \R$, in several steps.
First, we obtain a lower bound valid for all $\lambda \in \R$.
Then we obtain an upper bound for the subcritical case ($\lambda < \ln(1/p)$).
These two bounds combined give the exact logarithmic asymptotics
for the subcritical case.
The limit in the critical case ($\lambda=\ln(1/p)$) requires special care
and is treated separately.
Finally, we obtain asymptotics for the supercritical case
($\lambda > \ln(1/p)$).

\begin{lemma}
\label{lemma:bigger}
It holds that
\[
\liminf_{n\rightarrow\infty}
\frac{1}{\log_{1/p}n}\ln\, \mathbb{E}\exp\left\{\lambda\, L(n)\right\}
\geq \lambda,
\]
for all $\lambda\in\mathbb{R}$.
\end{lemma}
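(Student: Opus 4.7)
The plan is to apply Jensen's inequality, which, since $x\mapsto e^{\lambda x}$ is convex for every $\lambda\in\R$, gives
\[
\mathbb{E}\exp\{\lambda L(n)\}\,\ge\,\exp\{\lambda\,\mathbb{E} L(n)\}.
\]
Taking logarithms and dividing by $\log_{1/p}n$,
\[
\frac{1}{\log_{1/p}n}\,\ln\mathbb{E}\exp\{\lambda L(n)\}\,\ge\, \lambda\cdot \frac{\mathbb{E} L(n)}{\log_{1/p}n}.
\]
By the mean asymptotic \eqref{june-10-mean}, $\mathbb{E} L(n)/\log_{1/p}n\to 1$, so taking $\liminf$ on both sides yields the claim for all $\lambda\in\R$ simultaneously.

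If one prefers a self-contained argument (avoiding reliance on the mean asymptotic), the same bound follows from a direct truncation based only on \eqref{Lnom}. Fix $\epsilon>0$. For $\lambda\ge 0$, set $k_n=\lfloor(1-\epsilon)\log_{1/p}n\rfloor$ and observe that $e^{\lambda L(n)}\ge e^{\lambda k_n}\1_{\{L(n)\ge k_n\}}$, so
\[
\mathbb{E}\exp\{\lambda L(n)\}\,\ge\, e^{\lambda k_n}\,\mathbb{P}(L(n)\ge k_n).
\]
Since $L(n)/\log_{1/p}n\to 1$ almost surely, the probability on the right tends to $1$; dividing by $\log_{1/p}n$ and taking $\liminf$ gives $\ge\lambda(1-\epsilon)$, and letting $\epsilon\downarrow 0$ delivers the result. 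For $\lambda<0$ one uses instead $k_n=\lceil(1+\epsilon)\log_{1/p}n\rceil$ together with the event $\{L(n)\le k_n\}$, on which $e^{\lambda L(n)}\ge e^{\lambda k_n}$ precisely because $\lambda<0$; the conclusion is identical.

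There is no real obstacle here; the proof is essentially a one-line consequence of convexity combined with the nominal-value asymptotics for $L(n)$. The only mild point to track in the direct-estimate variant is the sign of $\lambda$, because the bound $e^{\lambda L(n)}\ge e^{\lambda k_n}$ must be restricted to the side of $k_n$ consistent with the monotonicity of $e^{\lambda\cdot}$. The Jensen route handles all $\lambda\in\R$ uniformly, at the mild cost of invoking \eqref{june-10-mean}.
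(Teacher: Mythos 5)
Your proof is correct, and your primary (Jensen) route is genuinely different from the paper's. The paper argues by truncation: for $\lambda>0$ it restricts to the event $\{L(n)\ge(1-\epsilon)\log_{1/p}n\}$, pulls out $e^{\lambda(1-\epsilon)\log_{1/p}n}$, uses $\P(L(n)\ge(1-\epsilon)\log_{1/p}n)\to 1$, and lets $\epsilon\downarrow 0$; for $\lambda<0$ it conditions on the complementary side of the nominal value. Your second, ``self-contained'' variant is essentially this same argument --- and in fact your sign bookkeeping for $\lambda<0$ is the correct version of it: the bound $e^{\lambda L(n)}\ge e^{\lambda k_n}$ with $\lambda<0$ must be taken on $\{L(n)\le k_n\}$ (where the probability also tends to $1$), exactly as you do, whereas the paper as written restricts to $\{L(n)\ge(1+\epsilon)\log_{1/p}n\}$, which is evidently a slip. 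Your Jensen argument, $\E e^{\lambda L(n)}\ge e^{\lambda \E L(n)}$ followed by $\E L(n)/\log_{1/p}n\to 1$, handles all $\lambda\in\R$ in one stroke and is cleaner, but it buys this uniformity by importing the mean asymptotics \eqref{june-10-mean}, which the paper cites from the literature rather than proves; the truncation route needs only the convergence in probability implied by \eqref{Lnom} (or the elementary tail bounds of Lemma \ref{Lbounds}), so it is more self-contained within the paper's logical development. Either argument is acceptable; just be explicit, if you use Jensen, that you are taking $\E L(n)\sim\log_{1/p}n$ as an external input (or note that it follows from Lemma \ref{Lbounds} via uniform integrability).
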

\begin{proof}
The case $\lambda=0$ is trivial. Assume $\lambda > 0$. Then,
for $0<\epsilon<1$,
\begin{align*}
\E \exp\{\lambda\, L(n)\}
& \ge \E\big[\exp\{\lambda\, L(n)\};\, L(n) \ge (1-\epsilon)\log_{1/p} n\big]
\\
& \ge \exp\{\lambda (1-\epsilon)
\log_{1/p} n\}\, \P(L(n) \ge (1-\epsilon)\log_{1/p} n).
\end{align*}
Hence
\[
\frac{1}{\log_{1/p} n} \, \ln\, \E \exp\{\lambda\, L(n)\}
\ge \lambda(1-\epsilon) + \frac{1}{\log_{1/p} n}
\ln \, \P(L(n) \ge (1-\epsilon)\log_{1/p} n)
\]
Since $\P(L(n) \ge (1-\epsilon)\log_{1/p} n) \to 1$,
\[
\liminf_{n \to \infty}  \frac{1}{\log_{1/p} n} \, \ln\,
\E \exp\{\lambda\, L(n)\}
\ge \lambda (1-\epsilon),
\]
and letting $\epsilon \downarrow 0$ we obtain the result.
When $\lambda < 0$, we use
\[
\E \exp\{\lambda\, L(n)\}
\ge \E\big[\exp\{\lambda\, L(n)\};\, L(n) \ge (1+\epsilon)\log_{1/p} n\big]
\]
and proceed similarly.
\end{proof}

The following bound for the distribution of $L(n)$ is known in
the literature, but we give a simple proof below for completeness.
\begin{lemma}
For all $k,n \in \N$, $1\le k \le n$,
\label{Lbounds}
\[
(1-p^k)^{n-k+1} \le \P(L(n) < k) \le (1-qp^k)^{n-k+1}.
\]
\end{lemma}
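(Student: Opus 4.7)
The plan is to prove the two inequalities separately. Set $A_i:=\{X_{i+1}=\cdots=X_{i+k}=1\}$ for $i=0,1,\ldots,n-k$, so that $\{L(n)<k\}=\bigcap_{i=0}^{n-k}A_i^c$ with $\P(A_i)=p^k$. Each $A_i^c$ is a decreasing function of the independent Bernoulli coordinates $X_1,\ldots,X_n$, so Harris's inequality (the product form of the FKG inequality for independent random variables) applied to these $n-k+1$ decreasing events yields
\[
\P(L(n)<k)=\P\Big(\bigcap_{i=0}^{n-k}A_i^c\Big)\ge\prod_{i=0}^{n-k}\P(A_i^c)=(1-p^k)^{n-k+1},
\]
which is the lower bound.

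For the upper bound, I would write $p_n:=\P(L(n)<k)$ and first derive the one-step recursion
\[
p_{n-1}-p_n=qp^k\,p_{n-k-1},\qquad n\ge k+1.
\]
The left-hand side equals $\P(L(n-1)<k,\,L(n)\ge k)$; on this event the new length-$k$ run must end at position $n$, so $X_{n-k+1}=\cdots=X_n=1$ is forced, the constraint $L(n-1)<k$ further forces $X_{n-k}=0$, and the prefix $X_1,\ldots,X_{n-k-1}$ must satisfy $L<k$ but is otherwise unconstrained. Independence of the three disjoint blocks of coordinates produces the probability $p_{n-k-1}\cdot q\cdot p^k$.

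Because $L(m)$ is non-decreasing in $m$, the sequence $p_m$ is non-increasing, whence $p_{n-k-1}\ge p_{n-1}$ and the recursion collapses to the one-step inequality $p_n\le(1-qp^k)\,p_{n-1}$. Iterating from the base case $p_k=1-p^k\le 1-qp^k$ then gives $p_n\le(1-qp^k)^{n-k+1}$, as required. To the limited extent there is a main obstacle, it lies in the careful identification of the event $\{L(n-1)<k,\,L(n)\ge k\}$ and the verification that it constrains only the $k+1$ coordinates $X_{n-k},\ldots,X_n$ (beyond the $L<k$ requirement on the prefix); once the recursion is in hand the monotonicity step makes the iterative bound automatic.
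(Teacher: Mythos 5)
Your proposal is correct and follows essentially the same route as the paper: Harris's (FKG) correlation inequality for the decreasing events giving the lower bound, and for the upper bound the identity $\P(L(m-1)<k)-\P(L(m)<k)=qp^k\,\P(L(m-k-1)<k)$ combined with monotonicity of $m\mapsto\P(L(m)<k)$ to get the one-step contraction $\P(L(m)<k)\le(1-qp^k)\P(L(m-1)<k)$. The only cosmetic difference is that you iterate from the base case $p_k=1-p^k$ while the paper telescopes the product of ratios starting from the trivial $\P(L(k-1)<k)=1$; both yield the exponent $n-k+1$.
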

\begin{proof}
Let $X_1, X_2, \ldots$ be i.i.d.\ with $\P(X_1=1)=p$, $\P(X_1=0)=q$.
Let $S_i = X_1+\cdots+X_i$, $i \ge 1$.
Notice that $L(n)<k$ if and only if $S_m-S_{m-k}<k$ for
all $k \le m \le n$.
By a standard correlation inequality, 
\[
\P\left(\bigcap_{m=k}^n \{S_m-S_{m-k}<k\}\right) \ge \prod_{m=k}^n
\P(S_m-S_{m-k}<k) = \prod_{m=k}^n (1-p^k) = (1-p^k)^{n-k+1},
\]
and this is the lower bound.
For the upper bound, since, trivially, $L(k-1)<k$, we have
\[
\P(L(n) < k) = \prod_{m=k}^n \frac{\P(L(m)<k)}{\P(L(m-1)<k)}.
\]
But, since, trivially again, $L(m) \ge L(m-1)$ for all $m$,
\[
\P(L(m-1)<k) = \P(L(m)<k) + \P(L(m-1)<k \le L(m)),
\]
and observe that
\begin{multline*}
\P(L(m-1)<k \le L(m)) = \P(L(m-k-1)<k,\, X_{m-k}=0, X_{m-k+1}=\cdots=X_m=1)
\\
= \P(L(m-k-1)<k)\, qp^k  \ge \P(L(m-1)<k)\, qp^k.
\end{multline*}
Substituting this into the previous display gives
$\P(L(m)<k)\le (1-qp^k)\, \P(L(m-1)<k)$
which implies that $\P(L(n) < k) \le \prod_{m=k}^n  (1-qp^k)
= (1-qp^k)^{n-k+1}$, as claimed.
\end{proof}

We next obtain an upper bound in the subcritical regime.
Remember that $\ell(n)=:\log_{1/p} n$.

\begin{lemma}
\label{lemma:less}
It holds that
\[
\limsup_{n\rightarrow\infty}\, \frac{1}{\log_{1/p}n}\ln\,
\mathbb{E}\exp\left\{\lambda\,
L(n)\right\}\leq \lambda,
\]
for $-\infty < \lambda<\ln(1/p)$.
\end{lemma}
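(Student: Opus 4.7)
The plan is to combine the upper tail bound $\P(L(n) \ge k) \le 1-(1-p^k)^{n-k+1} \le np^k$, extracted from Lemma \ref{Lbounds} via Bernoulli's inequality, with a split of $\mathbb{E}e^{\lambda L(n)}$ at the threshold $K_n := \lceil (1+\epsilon)\ell(n)\rceil$ when $\lambda > 0$ (and $\lfloor (1-\epsilon)\ell(n)\rfloor$ when $\lambda < 0$) for arbitrary $\epsilon > 0$.

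For $0 \le \lambda < \ln(1/p)$, I would write $\mathbb{E}e^{\lambda L(n)} \le e^{\lambda K_n} + \sum_{k > K_n} e^{\lambda k}\,\P(L(n) \ge k)$, where the first term accounts for the event $\{L(n) \le K_n\}$ via the trivial bound $e^{\lambda L(n)} \le e^{\lambda K_n}$, and the second for the tail. The bulk contribution $e^{\lambda K_n}$ is of order $n^{\lambda(1+\epsilon)/\ln(1/p)}$. Inserting $\P(L(n) \ge k) \le np^k$ in the tail sum yields the geometric series $n\sum_{k > K_n}(pe^\lambda)^k$, whose convergence is guaranteed precisely by subcriticality, since $\lambda < \ln(1/p)$ is equivalent to $pe^\lambda < 1$. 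Its value is of order $n\,(pe^\lambda)^{K_n}$, whose logarithm divided by $\ell(n)$ is $\lambda(1+\epsilon) - \epsilon\ln(1/p)$, strictly smaller than the bulk exponent. Combining the two estimates, dividing by $\ell(n)$, and letting $\epsilon \downarrow 0$ gives $\limsup \le \lambda$.

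For $\lambda < 0$ (the case $\lambda = 0$ is trivial), I would instead split as $\mathbb{E}e^{\lambda L(n)} = \mathbb{E}[e^{\lambda L(n)}; L(n) \ge (1-\epsilon)\ell(n)] + \mathbb{E}[e^{\lambda L(n)}; L(n) < (1-\epsilon)\ell(n)]$, bounding the first piece by $e^{\lambda(1-\epsilon)\ell(n)}$ because $x \mapsto e^{\lambda x}$ is decreasing, and the second by $\P(L(n) < (1-\epsilon)\ell(n))$, which by the upper bound in Lemma \ref{Lbounds} is of order $\exp(-c\,n^\epsilon)$ and hence negligible next to the merely polynomially small first piece. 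Taking logarithms, dividing by $\ell(n)$, and letting $\epsilon \downarrow 0$ concludes the argument.

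The main obstacle, though modest, is the delicate balancing in the $\lambda > 0$ regime: the tail probability decays only polynomially, so it is essential that the extra factor $p^{K_n} = n^{-(1+\epsilon)}$ overwhelm both the prefactor $n$ from the union-bound-type tail estimate and the exponential weight $e^{\lambda K_n}$. Subcriticality is exactly what makes this work, which is why the regime $\lambda \ge \ln(1/p)$ will require a separate treatment later. Rounding $K_n$ to an integer and replacing $n-k+1$ by $n$ contribute only multiplicative constants, which wash out after dividing by $\ell(n)$.
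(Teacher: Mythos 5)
Your proposal is correct and follows essentially the same route as the paper: split the expectation at a threshold near $(1+\epsilon)\log_{1/p}n$, control the tail with the bound $\P(L(n)\ge k)\le np^k$ extracted from Lemma \ref{Lbounds}, and use the convergence of the resulting geometric series with ratio $pe^{\lambda}<1$, which is exactly the subcriticality condition. The only substantive difference is for $\lambda<0$, where you bound the lower-tail contribution by $\P\left(L(n)<(1-\epsilon)\log_{1/p}n\right)\le\exp(-c\,n^{\epsilon})$ directly from Lemma \ref{Lbounds}, whereas the paper decomposes into shells and invokes Lemma \ref{lemma:borrowed-from-wu}; your version is marginally more self-contained, though the paper notes that lemma also follows from Lemma \ref{Lbounds}.
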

\begin{proof}
Suppose first that $0 < \lambda < \ln(1/p)$, pick $\epsilon > 0$, and write
\begin{equation}
\label{dec}
\E e^{\lambda L(n)}
= \E \left( e^{\lambda L(n)};\, \frac{L(n)}{\ell(n)}-1 \le \epsilon \right)
+ \E \left( e^{\lambda L(n)};\, \frac{L(n)}{\ell(n)}-1 > \epsilon \right)
=: \A_+(n) + \B_+(n).
\end{equation}
The first term is estimated as
\begin{equation}
\label{A+}
\A_+(n) \le e^{\lambda (1+\epsilon)\ell(n)} \,
\P\left(\frac{L(n)}{\ell(n)}-1 \le \epsilon \right),
\end{equation}
and so
\[
\frac{\ln \A_+(n)}{\ell(n)} \le \lambda (1+\epsilon) + o(1),
\]
implying that
\[
\limsup_{n \to \infty} \frac{\ln \A_+(n)}{\ell(n)} \le \lambda.
\]

For the second term we write
\begin{align*}
\B_+(n) &:=
\sum_{k=1}^\infty
\E\left( e^{\lambda L(n)};\, 1+k\epsilon < \frac{L(n)}{\ell(n)}
\le 1+ (k+1)\epsilon \right)
\\
& \le \sum_{k=1}^\infty e^{\lambda(1+ (k+1)\epsilon) \ell(n)}
\, \P\left(\frac{L(n)}{\ell(n)} > 1+k\epsilon \right)
\end{align*}
Observe now, from Lemma \eqref{Lbounds}, that
\[
\P(L(n) \ge k) =1-\P(L(n) <k)
\le 1- (1-p^k)^{n-k+1} \le (n-k+1) p^k \le n p^k,
\]
for all $0 \le k \le n$, and, trivially, for all $k > n$ also.
This implies that
\[
\P(L(n) > t) \le n p^t, \quad t \ge 0,
\]
and so
\[
P\left(\frac{L(n)}{\ell(n)} > 1+k\epsilon \right)
\le n\, p^{(1+k\epsilon)\ell(n)}
= n\cdot n^{-(1+k\epsilon)} = n^{-k\epsilon}.
\]
Therefore,
\begin{align*}
\B_+(n) & \le e^{\lambda(1+ \epsilon) \ell(n)} \,
\sum_{k=1}^\infty  e^{\lambda k \epsilon \ell(n)}\, n^{-k\epsilon}
\\
& = e^{\lambda(1+ \epsilon) \ell(n)} \,
\sum_{k=1}^\infty  n^{-\left(1-\frac{\lambda}{\ln(1/p)}\right)\,k\epsilon}
\\
&= e^{\lambda(1+ \epsilon) \ell(n)} \,
\left(n^{\left(1-\frac{\lambda}{\ln(1/p)}\right)\,\epsilon} -1\right)^{-1},
\end{align*}
whence
\[
\limsup_{n \to \infty} \frac{\ln \B_+(n)}{\ell(n)} \le \lambda.
\]
Since
\[
\limsup_{n \to \infty} \frac{\ln \E e^{\lambda L(n)}}{\ell(n)}
= \max\left\{\limsup_{n \to \infty} \frac{\ln \A_+(n)}{\ell(n)},\,
\limsup_{n \to \infty} \frac{\ln \B_+(n)}{\ell(n)} \right\}
\]
the result follows.

Suppose next that $\lambda < 0$. For $0< \epsilon < 1$, write
\[
\E e^{\lambda L(n)}
= \E \left( e^{\lambda L(n)};\, \frac{L(n)}{\ell(n)}-1 > -\epsilon \right)
+ \E \left( e^{\lambda L(n)};\, \frac{L(n)}{\ell(n)}-1 \le -\epsilon \right)
=: \A_-(n) + \B_-(n)
\]
For the first term we have
\[
\A_-(n) \le e^{\lambda (1-\epsilon) \ell(n)} \, \P\left(\frac{L(n)}{\ell(n)}-1 > -\epsilon \right)
\]
implying that
\[
\limsup_{n \to \infty} \frac{\ln \A_-(n)}{\ell(n)} \le \lambda.
\]
As for the second term,
\begin{align*}
\B_-(n) &= \sum_{k=1}^{\lfloor 1/\epsilon \rfloor -1}
 \E \left( e^{\lambda L(n)};\,
1-(k+1)\epsilon \le \frac{L(n)}{\ell(n)} < 1-k\epsilon \right)
\\
&\le
\sum_{k=1}^{\lfloor 1/\epsilon \rfloor -1}
e^{\lambda (1-(k+1)\epsilon)\ell(n)}\,
\P\left(\frac{L(n)}{\ell(n)} < 1-k\epsilon \right)
\end{align*}
Since there are only finitely many terms in the sum, we can simply
write
\begin{align*}
\limsup_{n \to \infty} \frac{\ln \B_-(n)}{\ell(n)}
& \le \max_{1\le k \le \lfloor 1/\epsilon\rfloor -1}
\left\{
\lambda(1-(k+1)\epsilon)
+ \limsup_{n \to \infty} \frac{1}{\ell(n)}
\ln \P\left(\frac{L(n)}{\ell(n)} < 1-k\epsilon \right)
\right\}
\\
&\le \max_{1\le k \le \lfloor 1/\epsilon\rfloor -1}
\left\{\lambda(1-(k+1)\epsilon) -\infty\right\} = -\infty,
\end{align*}
where $-\infty$ appears because of Lemma \ref{lemma:borrowed-from-wu} below.
We again conclude that $\limsup \ell(n)^{-1} \ln \E e^{\lambda L(n)}
\le \lambda$.
\end{proof}

The critical case is treated next.
\begin{lemma}\label{lemma:june-1}
When $\lambda=\ln(1/p),$ it holds that
\[
\lim_{n\rightarrow\infty}\frac{1}{\log_{1/p}n}\,
\ln,\,\mathbb{E}\exp\left\{\lambda\,
L(n)\right\}= 2\lambda.
\]
\end{lemma}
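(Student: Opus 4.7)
The plan is to exploit the identities $e^{\lambda L(n)} = p^{-L(n)}$ and $e^{\lambda \ell(n)} = n$, which hold at $\lambda = \ln(1/p)$. The claim then becomes $\ln \mathbb{E} p^{-L(n)} \sim 2 \ln n$. Note that the generic lower bound in Lemma \ref{lemma:bigger} only yields $\liminf \geq \lambda$, so a sharper lower bound is needed; the matching upper bound also requires a new argument since the geometric-tail estimate of Lemma \ref{lemma:less} degenerates exactly at the critical point.

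For the upper bound, I would recycle the strip decomposition used in the proof of Lemma \ref{lemma:less}, namely
\[
\mathbb{E} p^{-L(n)} \leq e^{\lambda(1+\epsilon)\ell(n)} + \sum_{k=1}^{K_n} e^{\lambda(1+(k+1)\epsilon)\ell(n)} \, \mathbb{P}\!\left(\tfrac{L(n)}{\ell(n)} > 1+k\epsilon\right),
\]
where $K_n$ is the smallest integer with $(1+(K_n+1)\epsilon)\ell(n) \geq n$, so $K_n = O(n/(\epsilon\,\ell(n)))$. Using the tail bound $\mathbb{P}(L(n)/\ell(n) > 1+k\epsilon) \leq n^{-k\epsilon}$ already derived there, each strip contributes $n^{1+(k+1)\epsilon} \cdot n^{-k\epsilon} = n^{1+\epsilon}$. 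The crucial difference from the subcritical case is that the geometric damping factor $n^{-k\epsilon(1-\lambda/\ln(1/p))}$ collapses to $1$ at $\lambda = \ln(1/p)$, so rather than a summable geometric series we have $K_n$ equal contributions. This gives $\mathbb{E} p^{-L(n)} \leq C\, n^{2+\epsilon}/(\epsilon\,\ell(n))$, and dividing $\ln(\cdot)$ by $\ell(n)$ and sending $\epsilon \downarrow 0$ yields $\limsup \leq 2\lambda$.

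For the lower bound, Abel summation—using $\mathbb{P}(L(n) \geq 0) = 1$ and $\mathbb{P}(L(n) \geq n+1) = 0$—gives the exact identity
\[
\mathbb{E} p^{-L(n)} = 1 + q\sum_{k=1}^n p^{-k}\,\mathbb{P}(L(n) \geq k).
\]
I would then combine the lower bound $\mathbb{P}(L(n) \geq k) \geq 1 - (1-qp^k)^{n-k+1}$ from Lemma \ref{Lbounds} with the elementary estimates $(1-x)^N \leq e^{-Nx}$ and $1 - e^{-y} \geq y/2$ for $y \in [0,1]$, which are applicable as soon as $(n-k+1)qp^k \leq 1$. For $k \in [\lceil \ell(n)\rceil,\, \lfloor n/2\rfloor]$ one has $(n-k+1)qp^k \leq nqp^k \leq q < 1$ and $n-k+1 \geq n/2$, so $p^{-k}\mathbb{P}(L(n) \geq k) \geq nq/4$. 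Summing this uniform $\Theta(n)$ bound over a window of $\Theta(n)$ values of $k$ gives $\mathbb{E} p^{-L(n)} \geq \Omega(n^2)$, whence $\ln \mathbb{E} p^{-L(n)} \geq 2\ln n + O(1)$ and $\liminf \geq 2\lambda$.

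The main obstacle, compared with the subcritical regime, is the absence of a dominant scale: the integrand $p^{-k}\mathbb{P}(L(n) \geq k)$ is of order $n$ uniformly over the entire linear window $k \in [\ell(n), n/2]$, so the expectation is not captured by concentration near the nominal value $\ell(n)$ nor by a quickly convergent geometric tail. Recognising that one must genuinely sum $\Theta(n)$ contributions of equal order—both for the upper bound (truncating at $L(n) \leq n$) and for the lower bound (summing a non-decaying tail sum)—is what manufactures the additional factor of $n$ that promotes the exponent from $\lambda$ to $2\lambda$.
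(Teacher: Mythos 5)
Your proof is correct. The upper bound is essentially the paper's argument: the same decomposition into the bulk $\{L(n)/\ell(n)\le 1+\epsilon\}$ plus strips of width $\epsilon\,\ell(n)$, the same observation that the geometric damping present in the subcritical case disappears at $\lambda=\ln(1/p)$, and the same truncation of the strip index at $O(n/(\epsilon\,\ell(n)))$ because $L(n)\le n$; you use the cruder one-sided tail bound $\P(L(n)>t)\le np^t$ where the paper uses a two-sided refinement, but the count of $\Theta(n/(\epsilon\,\ell(n)))$ strips each contributing $n^{1+\epsilon}$ gives the same $(2+\epsilon)\ln(1/p)$ and the limit $\epsilon\downarrow 0$ closes it. Your lower bound, however, is genuinely different and cleaner. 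The paper again sums over strips, but to make the strip probabilities $\P(1+k\epsilon<L(n)/\ell(n)\le 1+(k+1)\epsilon)$ bounded below it needs a \emph{two-sided} estimate $\P(L(n)/\ell(n)>1+k\epsilon)\asymp n^{-(1+k\epsilon)}\bigl(n+1-(1+k\epsilon)\ell(n)\bigr)$ with constants $c_1,c_2$, and then a telescoping/cancellation computation to show the resulting sum $\S(n)$ is of order $n/\ell(n)$. You bypass all of this with the exact Abel-summation identity $\E p^{-L(n)}=1+q\sum_{k=1}^n p^{-k}\P(L(n)\ge k)$ (which checks out, using $\P(L(n)\ge 0)=1$ and $\P(L(n)\ge n+1)=0$) together with the one-sided lower bound of Lemma \ref{Lbounds} and the elementary inequalities $(1-x)^N\le e^{-Nx}$ and $1-e^{-y}\ge y/2$ on $[0,1]$, yielding the uniform bound $p^{-k}\P(L(n)\ge k)\ge nq/4$ over the window $\lceil\ell(n)\rceil\le k\le\lfloor n/2\rfloor$ and hence $\E p^{-L(n)}\ge \Omega(n^2)$. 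This avoids any matching of constants or difference-of-probabilities analysis, and it isolates the mechanism behind the factor $2$ more transparently: the summand is of order $n$ uniformly over $\Theta(n)$ values of $k$. Both approaches are valid; yours trades the paper's strip bookkeeping for a standard summation-by-parts identity and is, if anything, the more robust of the two.
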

\begin{proof}
Fix sufficiently small $\epsilon > 0$.
Using the probability estimates of Lemma \ref{Lbounds}
we obtain that there exist positive constants $c_1$, $c_2$ such that
\begin{equation*}
c_2\, n^{-(1+k\epsilon)}\left(n+1-(1+k\epsilon)\ell(n)\right)
\le \P\left(\frac{L(n)}{\ell(n)}> 1+k\epsilon\right)
\leq
c_1\, n^{-(1+k\epsilon)}\left(n+1-(1+k\epsilon)\ell(n)\right)
\end{equation*}
uniformly over all $k$ such that
\begin{equation}
\label{Nn}
1\le k \le
\left\lfloor\frac{1}{\epsilon}\left(\frac{n}{\ell(n)}-1\right)\right\rfloor
=:N_n.
\end{equation}
We first obtain a lower bound.  From the estimate above,
\begin{align*}
\E e^{\lambda L(n)}
&\ge \E \left( e^{\lambda L(n)};\, \frac{L(n)}{\ell(n)} > 1+\epsilon \right)
\\
&\ge \sum_{k=1}^{N_n}
\E\left( e^{\lambda L(n)};\, 1+k\epsilon < \frac{L(n)}{\ell(n)}
\le 1+(k+1)\epsilon \right)
\\
&\ge \sum_{k=1}^{N_n}
\E\left( e^{\lambda \ell(n) (1+k\epsilon)};
\, 1+k\epsilon < \frac{L(n)}{\ell(n)} \le 1+(k+1)\epsilon \right).
\end{align*}
Since $\lambda = \ln(1/p)$ and $\ell(n) = (\ln n)/\ln(1/p)$ have
$ \exp(\lambda \ell(n)) = \exp(\ln n) =n$.
Hence
\begin{align*}
\E e^{\lambda L(n)}
&\ge n\, \sum_{k=1}^{N_n} n^{k\epsilon}
\,
\left[\P\left( \frac{L(n)}{\ell(n)} > 1+k\epsilon \right)
- \P\left( \frac{L(n)}{\ell(n)} > 1+(k+1)\epsilon \right)\right]
\\
& \ge n\, \sum_{k=1}^{N_n} n^{k\epsilon}
\,
\left[c_2\, n^{-(1+k\epsilon)}\left(n+1-(1+k\epsilon)\ell(n)\right)
-c_1\, n^{-(1+(k+1)\epsilon)}\left(n+1-(1+(k+1)\epsilon)\ell(n)\right)
\right]
\\
&= n\, \sum_{k=1}^{N_n}
\left[c_2\, n^{-1}\left(n+1-(1+k\epsilon)\ell(n)\right)
-c_1\, n^{-(1+\epsilon)}\left(n+1-(1+(k+1)\epsilon)\ell(n)\right)
\right]
\\
&=: n\, \S(n).
\end{align*}
Hence
\[
\frac{\ln \E e^{\lambda L(n)}}{\ell(n)}
\ge \frac{\ln n}{\ell(n)} + \frac{\ln \S(n)}{\ell(n)}
= \ln(1/p) + \ln(1/p)\, \frac{\ln \S(n)}{\ln n.}
\]
We now claim that the last ratio converges to $1$.
This follows by direct computation:
\[
\ln \S(n)
\sim \ln\left[ \frac{c_2 n}{2\epsilon \log_{1/p}n}
-\frac{c_1 n^{1-\epsilon}}{2\epsilon \log_{1/p}n} \right]
\sim \ln\left[ \frac{c_2 n}{2\epsilon \log_{1/p}n}\right]
= \ln n + o(\ln n).
\]
Hence we have proved a lower bound:
\[
\liminf_{n \to \infty} \frac{\ln \E e^{\lambda L(n)}}{\ell(n)}
\ge 2 \ln (1/p).
\]
To get an upper bound, we use the decomposition \eqref{dec}
as in the proof of Lemma \ref{lemma:less}, but with $\lambda=\ln (1/p)$.
The first term is estimated in precisely the same manner; see
\eqref{A+}. Hence
\begin{equation}
\label{AA}
\limsup_{n \to \infty} \frac{\ln \A_+(n)}{\ell(n)} \le \lambda = \ln(1/p).
\end{equation}
For the second term, we write
\begin{align*}
\B_+(n) &=
\E \left( e^{\lambda L(n)};\, \frac{L(n)}{\ell(n)}-1 > \epsilon \right)
\\
&=
\sum_{k=1}^{N_n}
\E\left( e^{\lambda L(n)};\, 1+k\epsilon < \frac{L(n)}{\ell(n)}
\le 1+ (k+1)\epsilon \right),
\end{align*}
where $N_n$ is as in \eqref{Nn}, giving
\begin{align*}
\B_+(n) &\le \sum_{k=1}^{N_n}
e^{\lambda \ell(n) [1+(k+1)\epsilon]}
\, \P\left(\frac{L(n)}{\ell(n)} > 1+k\epsilon\right)
\\
&= n^{1+\epsilon}\, \sum_{k=1}^{N_n} n^{k\epsilon}\,
\P\left(\frac{L(n)}{\ell(n)} > 1+k\epsilon\right)
\\
&\le n^{1+\epsilon}\, \sum_{k=1}^{N_n}
c_1\, n^{-1}\, (n+1-(1+k\epsilon)\ell(n))
\end{align*}
from which
\begin{align*}
\frac{\ln \B_+(n)}{\ell(n)} &\le
(1+\epsilon) \ln (1/p) + \ln (1/p)\, \frac{1}{\ln n} \, \ln \sum_{k=1}^{N_n}
c_1\, n^{-1}\, (n+1-(1+k\epsilon)\ell(n)).
\end{align*}
By direct computation,
\[
\ln \sum_{k=1}^{N_n}
c_1\, n^{-1}\, (n+1-(1+k\epsilon)\ell(n))
\sim \ln \left[\frac{c_1 n}{\epsilon \log_{1/p} n} \right]
= \ln n + o(\ln n).
\]
Combining the last two displays and letting $\epsilon \downarrow 0$
we obtain
\begin{equation}
\label{BB}
\limsup_{n \to \infty} \frac{\ln \B_+(n)}{\ell(n)} \le 2 \ln(1/p).
\end{equation}
From the decomposition \eqref{dec}, with the estimates \eqref{AA}
and \eqref{BB}, we conclude that
\[
\limsup_{n \to \infty} \frac{\ln \E e^{\lambda L(n)}}{\ell(n)}
= \max\left\{\limsup_{n \to \infty} \frac{\ln \A_+(n)}{\ell(n)},\,
\limsup_{n \to \infty} \frac{\ln \B_+(n)}{\ell(n)} \right\}
\le \max\{\ln(1/p), \, 2 \ln (1/p)\} = 2 \ln (1/p).
\]
\end{proof}

In order to study the asymptotic behavior of
$\mathbb{E}\exp\left\{\lambda L(n)\right\}$ when $\lambda>\ln(1/p)$, we
use the following result.
\begin{lemma}\label{lemma:a-LD-probability}
For fixed $0\le x\le 1$, it holds that
\[
\lim_{n\rightarrow\infty} \frac{1}{n}\,
\ln\,\P\left(\frac{L(n)}{n}\ge x\right)=-x\ln(1/p).
\]
\end{lemma}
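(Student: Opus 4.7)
The plan is to read off both bounds directly from Lemma \ref{Lbounds} with $k=k_n:=\lceil xn\rceil$, since
\[
1-(1-qp^{k_n})^{n-k_n+1}\;\le\;\P(L(n)\ge k_n)\;\le\;1-(1-p^{k_n})^{n-k_n+1},
\]
and then verify that the dominant term on each side behaves like $n\, p^{xn}$ on the exponential scale. The boundary cases $x=0$ (where the probability equals $1$) and $x=1$ (where $\P(L(n)\ge n)=\P(L(n)=n)=p^n$) are trivial and can be disposed of immediately, so the interesting work is confined to $0<x<1$.

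For the upper bound I would use the elementary inequality $1-(1-y)^m\le my$ with $y=p^{k_n}$ and $m=n-k_n+1$, yielding $\P(L(n)\ge k_n)\le n\, p^{k_n}$. Taking logarithms, dividing by $n$, and using $k_n/n\to x$ together with $(\ln n)/n\to 0$, one obtains
\[
\limsup_{n\to\infty}\frac{1}{n}\ln\P\!\left(\frac{L(n)}{n}\ge x\right)\le x\ln p = -x\ln(1/p).
\]

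For the lower bound I would combine two standard inequalities: $(1-y)^m\le e^{-my}$ (giving $1-(1-y)^m\ge 1-e^{-my}$) and $1-e^{-u}\ge u/2$ for $0\le u\le 1$. With $y=qp^{k_n}$ and $m=n-k_n+1$, one has $my=(n-k_n+1)qp^{k_n}\to 0$ exponentially fast (since $x>0$), so for all sufficiently large $n$,
\[
\P(L(n)\ge k_n)\;\ge\;\tfrac12(n-k_n+1)\,q\,p^{k_n}.
\]
Taking logarithms, dividing by $n$, and using $\ln(n-k_n+1)/n\to 0$ (valid for $0<x<1$; and trivially handled for $x=1$ since $n-k_n+1=1$), together with $k_n/n\to x$, yields
\[
\liminf_{n\to\infty}\frac{1}{n}\ln\P\!\left(\frac{L(n)}{n}\ge x\right)\ge x\ln p = -x\ln(1/p).
\]

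The two bounds match, proving the claim. There is no real obstacle here; the only mild point of care is ensuring that $(n-k_n+1)qp^{k_n}\le 1$ so that the bound $1-e^{-u}\ge u/2$ applies, but this is immediate since $p^{k_n}$ decays exponentially while $n-k_n+1\le n$. Everything else is bookkeeping on the integer part $\lceil xn\rceil$, which does not affect the leading order in $n$.
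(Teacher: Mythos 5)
Your proposal is correct and follows essentially the same route as the paper: apply Lemma \ref{Lbounds} with $k=\lceil nx\rceil$, bound $1-(1-a)^N$ above by $Na$ and below by a constant multiple of $Na$ (the paper uses $(N-1)a$ for small $a$ where you use $\tfrac12 Na$ via $1-e^{-u}\ge u/2$), then take logarithms and divide by $n$. The only cosmetic difference is your explicit treatment of the boundary cases $x=0$ and $x=1$, which the paper leaves implicit.
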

\begin{proof}
We apply the inequalities of Lemma \ref{Lbounds}
with $k=\lceil nx \rceil$ and obtain
\begin{align*}
1-\left(1-qp^{\lceil nx \rceil}\right)^{n-\lceil nx \rceil+1}
\leq \P\left(\frac{L(n)}{n}\geq x\right)
\leq 1-\left(1-p^{\lceil nx \rceil}\right)^{n-\lceil nx \rceil+1}.
\end{align*}
Since, for $1-(1-a)^N \le Na$ for all $0 \le a \le 1$,
and since $1-(1-a)^N \ge (N-1) a$ for all sufficiently small $a\ge 0$,
we have that
\[
(n-\lceil nx \rceil)\, q p^{\lceil nx \rceil}
\le  \P\left(\frac{L(n)}{n}\geq x\right)
\le (n-\lceil nx \rceil+1)\, p^{\lceil nx \rceil},
\]
for all sufficiently large $n$.
Taking logarithms, dividing by $n$, and sending $n$ to $\infty$
finishes the proof.
\end{proof}

\begin{lemma}\label{lemma:infinity}
It holds that
\[
\lim_{n\rightarrow\infty}\frac{1}{n}\,\ln\,\mathbb{E}\exp\left\{\lambda\,
L(n)\right\}=\lambda-\ln(1/p),
\]
for $\lambda > \ln(1/p)$.
\end{lemma}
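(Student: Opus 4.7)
The plan is to bound $\E e^{\lambda L(n)}$ above and below by $e^{(\lambda - \ln(1/p))n}$ up to sub-exponential factors. The lower bound is essentially free: since $L(n)=n$ exactly when all $n$ tosses are heads, $\P(L(n)=n)=p^n$, so
\[
\E e^{\lambda L(n)} \ge e^{\lambda n}\,\P(L(n)=n) = e^{(\lambda-\ln(1/p))n},
\]
which immediately yields $\liminf_{n\to\infty} n^{-1}\ln\E e^{\lambda L(n)} \ge \lambda-\ln(1/p)$.

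For the upper bound, I would start from
\[
\E e^{\lambda L(n)} = \sum_{k=0}^n e^{\lambda k}\P(L(n)=k) \le \sum_{k=0}^n e^{\lambda k}\P(L(n)\ge k),
\]
and combine the trivial bound $\P(L(n)\ge k)\le 1$ with the tail bound $\P(L(n)\ge k)\le np^k$ extracted from Lemma \ref{Lbounds} (the same inequality already exploited in the proof of Lemma \ref{lemma:less}). Splitting the sum at the nominal value $k_0 := \lceil \log_{1/p} n\rceil$ — where the two bounds cross — gives
\[
\E e^{\lambda L(n)} \le \sum_{k=0}^{k_0} e^{\lambda k} + n\sum_{k=k_0+1}^n e^{(\lambda-\ln(1/p))k}.
\]
The first geometric sum is $O(e^{\lambda k_0})=O(n^{\lambda/\ln(1/p)})$, merely polynomial in $n$. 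The second, thanks to the hypothesis $\lambda>\ln(1/p)$, is geometric with ratio strictly greater than $1$, hence dominated by a constant times its last term, namely $O(n\,e^{(\lambda-\ln(1/p))n})$. Since the exponential contribution dwarfs the polynomial one, I obtain $\E e^{\lambda L(n)} = O(n\,e^{(\lambda-\ln(1/p))n})$, and taking $n^{-1}\ln$ furnishes the matching upper bound.

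The only point demanding care is the bookkeeping around the splitting: the choice $k_0\approx\log_{1/p}n$ is precisely the threshold where the two tail bounds coincide, and the assumption $\lambda>\ln(1/p)$ is exactly what ensures the tail part ($k>k_0$) rather than the bulk part ($k\le k_0$) drives the exponential growth rate. Lemma \ref{lemma:a-LD-probability} is not strictly needed here, although an alternative lower bound could be obtained by writing $\E e^{\lambda L(n)} \ge e^{\lambda \lceil nx\rceil}\P(L(n)\ge \lceil nx\rceil)$ for $0<x<1$ and sending $x\uparrow 1$.
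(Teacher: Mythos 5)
Your proof is correct, but it takes a genuinely different route from the paper's. For the lower bound the paper fixes $0<x<1$, writes $\E e^{\lambda L(n)} \ge e^{\lambda x n}\,\P(L(n)/n > x)$, invokes Lemma \ref{lemma:a-LD-probability} to get $\lambda x - x\ln(1/p)$, and then lets $x\uparrow 1$; your observation that $\P(L(n)=n)=p^n$ gives the sharp bound $e^{(\lambda-\ln(1/p))n}$ in one line, with no limiting argument and no appeal to Lemma \ref{lemma:a-LD-probability} at all. For the upper bound the paper discretizes $[0,1]$ into blocks of width $\epsilon$, bounds each block's contribution via Lemma \ref{lemma:a-LD-probability}, takes a maximum over the finitely many blocks, and finally sends $\epsilon\to 0$; you instead work with the exact sum $\sum_{k=0}^n e^{\lambda k}\P(L(n)=k)$, use the crude tail bound $\P(L(n)\ge k)\le np^k$ (the same consequence of Lemma \ref{Lbounds} already used in the proof of Lemma \ref{lemma:less}) together with the trivial bound $1$, and split at $k_0\approx\log_{1/p}n$. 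Since $\lambda>\ln(1/p)$ makes the tail sum geometric with ratio exceeding $1$, it is controlled by its last term, giving $O(n\,e^{(\lambda-\ln(1/p))n})$, while the bulk is only polynomial in $n$. Your argument is more elementary and self-contained — it bypasses Lemma \ref{lemma:a-LD-probability} entirely and avoids the double limit in $n$ and then $\epsilon$ (respectively $x$) — at the cost of being specific to this regime; the paper's scheme has the virtue of reusing Lemma \ref{lemma:a-LD-probability}, which is needed anyway for the LDP of Corollary \ref{th-second:longest-head-run}, and of running in parallel with the subcritical and critical arguments.
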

\begin{proof}
For the lower bound, fix  $0<x<1$, write
\begin{align*}
\E e^{\lambda L(n)}
\ge \E\left( e^{\lambda L(n)};\, \frac{L(n)}{n} > x \right)
\ge e^{\lambda x n} \, \P\left(\frac{L(n)}{n} > x \right),
\end{align*}
and use Lemma \ref{lemma:a-LD-probability}:
\[
\liminf_{n \to \infty} \frac{1}{n}\, \ln\, \E e^{\lambda L(n)}
\ge \lambda x - x \ln(1/p) \to \lambda - \ln(1/p),
\quad\text{as } x \to 1.
\]
For the upper bound, pick $\epsilon > 0$ and write
\begin{align*}
\E e^{\lambda L(n)}
&= \E\left( e^{\lambda L(n)};\, \frac{L(n)}{n} \le \epsilon \right)
+ \E\left( e^{\lambda L(n)};\, \frac{L(n)}{n} > \epsilon \right)
\\
&\le e^{\lambda \epsilon n}
+ \sum_{k=1}^{[1/\epsilon]-1}
\E\left( e^{\lambda L(n)};\, k\epsilon < \frac{L(n)}{n}\le (k+1)\epsilon\right)
\\
&\le e^{\lambda \epsilon n}
+ \sum_{k=1}^{[1/\epsilon]-1}
e^{\lambda (k+1)\epsilon n}\, \P\left(\frac{L(n)}{n} > k\epsilon\right)
\end{align*}
Hence (with $a \vee b := \max(a,b)$)
\begin{align*}
\limsup_{n \to \infty} \frac{1}{n}\, \E e^{\lambda L(n)}
&\le
(\lambda \epsilon) \vee
\max_{1\le k \le [1/\epsilon]-1}\left\{
\lambda (k+1)\epsilon - k \epsilon \ln(1/p)
\right\}
\\
& \le \lambda \epsilon + \lambda-\ln(1/p)
\to \lambda -\ln(1/p), \quad\text{ as } \epsilon \to 0.
\end{align*}
where we used Lemma \ref{lemma:a-LD-probability} again
and the assumption that $\lambda-\ln(1/p)>0$.
\end{proof}

\begin{lemma}[Theorem 1.1 in
\cite{Mao-Wang-Wu-2015}]\label{lemma:borrowed-from-wu}
For each $x>0,$ we have
\[
\lim_{n\rightarrow\infty}\frac{1}{\log_{1/p}n}\ln\,\,\mathbb{P}\left(\frac{L(n)}{\log_{1/p}n}\geq1+x\right)=-\ \ln(1/p).
\]
For every $0<x<1,$ we have
\[
\lim_{n\rightarrow\infty}\frac{1}{\log_{1/p}n}\ln\,\,\left[-\ln\,\,\mathbb{P}\left(\frac{L(n)}{\log_{1/p}n}\leq1-x\right)\right]=x \ln(1/p).
\]
\end{lemma}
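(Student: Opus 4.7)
The plan is to derive both limits directly from the sharp distributional sandwich of Lemma \ref{Lbounds}, namely $(1-p^k)^{n-k+1} \le \P(L(n)<k) \le (1-qp^k)^{n-k+1}$, by inserting a threshold $k=k_n$ calibrated to the relevant scale and then extracting the exponential order.

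For the first limit, set $k_n := \lceil (1+x)\ell(n)\rceil$, so that $p^{k_n}$ is of order $n^{-(1+x)}$. Writing $\P(L(n)\ge k_n) = 1-\P(L(n)<k_n)$ and using the elementary bounds $(N-1)a \le 1-(1-a)^N \le Na$ (valid for $0\le a\le 1$, $N\ge1$, with the lower bound needing $a$ small enough), the sandwich of Lemma \ref{Lbounds} gives
\[
(n-k_n)\, q p^{k_n} \;\le\; \P(L(n)\ge k_n) \;\le\; (n-k_n+1)\, p^{k_n},
\]
so $\P(L(n)\ge k_n)$ is of order $n\cdot n^{-(1+x)} = n^{-x}$. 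Taking $\ln$, dividing by $\ell(n)=\ln n/\ln(1/p)$, and letting $n\to\infty$ yields the claimed value $-x\ln(1/p)$ (correcting the apparent typo in the statement).

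For the second limit, set $k_n := \lfloor (1-x)\ell(n)\rfloor+1$, so that $p^{k_n}$ is of order $n^{-(1-x)}$, which is still small but much larger than in the first case. Applying Lemma \ref{Lbounds} and taking logarithms gives
\[
(n-k_n+1)\ln(1-p^{k_n}) \;\le\; \ln \P(L(n)<k_n) \;\le\; (n-k_n+1)\ln(1-q p^{k_n}).
\]
Using $\ln(1-y)=-y+O(y^2)$ for small $y$, both sides are of order $-n p^{k_n}$, hence $-\ln \P(L(n)\le (1-x)\ell(n))$ is of order $n^x$. A second logarithm yields $x\ln n + O(1)$, and dividing by $\ell(n)$ produces $x\ln(1/p)$.

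The main technical subtlety is in the second part: Lemma \ref{Lbounds} pins down $-\ln \P(L(n)\le (1-x)\ell(n))$ only up to a multiplicative constant (coming from the gap between $p^k$ and $qp^k$ inside the bounds), and one must verify that this constant, together with the $O(y^2)$ terms in the Taylor expansion and the floor/ceiling adjustments of $k_n$, contributes only additive $O(1)$ or $o(\ln n)$ errors inside the outer logarithm, which vanish after normalization by $\ell(n)$. Once this double-logarithm bookkeeping is written out carefully, both statements follow without further input.
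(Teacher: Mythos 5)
Your proof is correct and is exactly the derivation from Lemma \ref{Lbounds} that the paper alludes to in the note following the lemma (the paper itself gives no proof, simply citing Theorem 1.1 of \cite{Mao-Wang-Wu-2015}), and you are also right that the first displayed limit should read $-x\ln(1/p)$, consistent with \eqref{temp-05-28}. One small repair: the lower bound $1-(1-a)^N\ge (N-1)a$ actually requires $a\le 2/(N(N-1))$, which fails here since $a\asymp n^{-(1+x)}$ while $N\asymp n$; use instead $1-(1-a)^N\ge 1-e^{-Na}\ge Na/2$ for $Na\le 1$ (valid since $Na\sim qn^{-x}\to 0$), the factor $1/2$ being harmless in the logarithmic asymptotics.
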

\noindent Note that this lemma can be simply derived based on
Lemma \ref{Lbounds}, but what has actually been proved in
\cite{Mao-Wang-Wu-2015} is precise asymptotics without the logarithm.

\section{Large deviations principles}
\label{LDPsec}
We study the large deviations principles
announced in Corollaries \ref{th:longest-head-run} and
\ref{th-second:longest-head-run}.
Consider the logarithmic moment generating function of $L(n)/\log_{1/p}n$,
defined by
\[
\Lambda_n(\lambda)=\ln\,\,\mathbb{E}\exp\left\{\lambda\,
L(n)/\log_{1/p}n\right\},\quad \lambda\in\mathbb{R}.
\]
The proof of Corollary \ref{th:longest-head-run} is based on the
\textit{cumulant}, namely,
\begin{align}
\label{cumulant-definition}
\Lambda(\lambda):=\lim_{n\rightarrow \infty}\,
\frac{1}{\log_{1/p}n}\, \Lambda_n(\lambda \log_{1/p}n).
\end{align}
That this limit exists is a direct consequence of Theorem
\ref{proposition:Laplace-transform}:
\begin{proposition}
\label{proposition:cumulant}
The limit in \eqref{cumulant-definition} exists and is given by
\begin{equation*}
\Lambda(\lambda)=
\begin{cases}
	+\infty,& \lambda>\ln(1/p),\\
	2\lambda, & \lambda=\ln(1/p),\\
	\lambda,& \lambda<\ln(1/p).
\end{cases}
\end{equation*}
\end{proposition}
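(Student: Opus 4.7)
The plan is to observe that the reparametrization collapses the definition to exactly what Theorem \ref{proposition:Laplace-transform} computes. Since
\[
\Lambda_n(\lambda \log_{1/p} n) = \ln\mathbb{E}\exp\Bigl\{\lambda\log_{1/p} n \cdot \frac{L(n)}{\log_{1/p} n}\Bigr\} = \ln\mathbb{E}\exp\{\lambda L(n)\},
\]
the quantity we are taking a limit of is
\[
\frac{1}{\log_{1/p} n}\,\ln\mathbb{E}\exp\{\lambda L(n)\},
\]
and Theorem \ref{proposition:Laplace-transform} gives the precise logarithmic asymptotics of the numerator in all three regimes. So I would just verify case-by-case that dividing those asymptotics by $\log_{1/p} n$ yields the claimed value of $\Lambda(\lambda)$.

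First, for $\lambda < \ln(1/p)$, the subcritical asymptotics give $\ln\mathbb{E}e^{\lambda L(n)} \sim \lambda \log_{1/p} n$, so dividing by $\log_{1/p} n$ yields $\Lambda(\lambda)=\lambda$. Next, for $\lambda = \ln(1/p)$, the critical asymptotics give $\ln\mathbb{E}e^{\lambda L(n)} \sim 2\lambda\log_{1/p} n$, so the limit is $2\lambda$. These two steps are essentially one-line verifications.

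The only case that requires a small extra observation is the supercritical regime $\lambda > \ln(1/p)$. Here Theorem \ref{proposition:Laplace-transform} says $\ln\mathbb{E}e^{\lambda L(n)} \sim (\lambda-\ln(1/p))\, n$, so
\[
\frac{1}{\log_{1/p} n}\,\ln\mathbb{E}e^{\lambda L(n)} \sim (\lambda-\ln(1/p))\,\frac{n}{\log_{1/p} n} = (\lambda-\ln(1/p))\ln(1/p)\,\frac{n}{\ln n}.
\]
Since $\lambda-\ln(1/p) > 0$ and $n/\ln n \to \infty$, this tends to $+\infty$, matching the claimed value of $\Lambda(\lambda)$.

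There is no real obstacle here, since the heavy lifting was already carried out in Lemmas \ref{lemma:bigger}, \ref{lemma:less}, \ref{lemma:june-1}, and \ref{lemma:infinity} that combine to prove Theorem \ref{proposition:Laplace-transform}. The proposition is therefore best presented as an immediate corollary, with the supercritical calculation being the only piece worth writing out explicitly so that the reader sees why the rescaling by $\log_{1/p} n$ (rather than by $n$) forces $\Lambda$ to blow up past the critical slope.
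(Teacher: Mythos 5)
Your proof is correct and matches the paper's treatment: the paper simply asserts that the proposition is a direct consequence of Theorem \ref{proposition:Laplace-transform} after noting that $\Lambda_n(\lambda\log_{1/p}n)=\ln\mathbb{E}e^{\lambda L(n)}$, which is exactly your reparametrization. Your explicit check of the supercritical case (where the rescaling by $\log_{1/p}n$ rather than $n$ forces the limit to $+\infty$) is the only detail the paper leaves unwritten, and it is carried out correctly.
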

The Fenchel-Legendre transform of $\Lambda$ is the function
$x \mapsto \sup_{\lambda \in \R} [\lambda x - \Lambda(\lambda)]$
which (as an easy calculation shows)
is given by the function $\Lambda^*$ defined in \eqref{rate-function}:
\[
\sup_{\lambda \in \R} [\lambda x - \Lambda(\lambda)]
=
\Lambda^*(x)=
\begin{cases}
	+\infty,& x<1,\\
	(x-1)\ln(1/p),& x \ge 1.
\end{cases}
\]

\begin{proof}[Proof of Corollary \ref{th:longest-head-run}]
To prove the upper bound \eqref{LDP-upper-bound} we apply
the G\"{a}rtner-Ellis theorem
(cf. Section 2.3 in \cite{Dembo-Zeitouni-2009}).
For the lower bound \eqref{LDP-lower-bound}, we must give
a separate argument.
It suffices to prove that for a fixed point $y>1,$
\begin{align}
\label{end-near}
\lim_{\delta\rightarrow0}\liminf_{n\rightarrow\infty}
\frac{1}{\log_{1/p}n}\, \ln\,
\P\left(\frac{L(n)}{\log_{1/p}n}\in B_{y,\delta}\right)
\geq-(y-1)\ln(1/p),
\end{align}
where $B_{y,\delta}$ is the open ball centered at $y$ with a radius $\delta.$ To
achieve \eqref{end-near}, we write
\begin{align*}
\mathbb{P}\left(\frac{L(n)}{\log_{1/p}n}\in
B_{y,\delta}\right)=\mathbb{P}\left(\frac{L(n)}{\log_{1/p}n}>y-\delta\right)-\mathbb{P}\left(\frac{L(n)}{\log_{1/p}n}\geq
y+\delta\right).
\end{align*}
In order to analyze the logarithm, we apply an inequality in the form
$\ln(a-b)\geq \ln(a)-\frac{b}{a-b}$ for $a>b>0.$ Therefore,
\begin{multline}
\lim_{\delta\to 0}\liminf_{n\to\infty} \frac{1}{\ell(n)}\,
\ln\,\P\left(\frac{L(n)}{\ell(n)}\in B_{y,\delta}\right)
\\
\ge \lim_{\delta\to 0}\liminf_{n\to \infty} \frac{1}{\ell(n)}
\left(\ln\,\P\left(\frac{L(n)}{\ell(n)}>y-\delta\right)
-\frac{\P\left(\frac{L(n)}{\ell(n)}\ge y+\delta\right)}
{\P\left(\frac{L(n)}{\ell(n)}> y-\delta\right)
-\P\left(\frac{L(n)}{\ell(n)}\ge y+\delta\right)}\right).
\label{end-1}
\end{multline}
We can apply Lemma \ref{lemma:borrowed-from-wu} to handle the first limit as
follows
\begin{equation}
\label{end-2}
\lim_{\delta\to 0} \liminf_{n\to \infty} \frac{1}{\ell(n)}\,
\ln\,\P\left(\frac{L(n)}{\ell(n)} > y-\delta\right)
=\lim_{\delta\to 0} -(y-1-\delta)\ln(1/p)
= -(y-1)\ln(1/p).
\end{equation}
For the last ratio term in \eqref{end-1}, it follows from applying Lemma
\ref{lemma:borrowed-from-wu} twice that
\[
\P\left(\frac{L(n)}{\ell(n)}\ge  y+\delta\right)\leq
\exp\left\{\left[-(y-1+\delta)\ln(1/p)+\varepsilon_1\right]\ell(n)\right\}
\]
and
\[
\P\left(\frac{L(n)}{\ell(n)}> y-\delta\right)\ge
\exp\left\{\left[-(y-1-\delta)\ln(1/p)-\varepsilon_2\right]\ell(n)\right\}
\]
for sufficiently small $\varepsilon_1>0$ and $\varepsilon_2>0.$ Thus, assuming
$2\delta\ln(1/p)-\varepsilon_1-\varepsilon_2>0,$
\begin{multline}
\frac{\P\left(\frac{L(n)}{\ell(n)}\geq
y+\delta\right)}{\P\left(\frac{L(n)}{\ell(n)}>
y-\delta\right)-\P\left(\frac{L(n)}{\ell(n)}\geq y+\delta\right)}
=\frac{1}{\P\left(\frac{L(n)}{\ell(n)}>
y-\delta\right)/\P\left(\frac{L(n)}{\ell(n)}\geq
y+\delta\right)-1}
\\
\le \frac{1}{e^{(2\delta\ln(1/p)-\varepsilon_1-\varepsilon_2)\ell(n)}-1}
\to 0 ,\quad \text{as } n \to \infty.
\label{end-3}
\end{multline}
Then \eqref{end-near} follows by substituting
\eqref{end-2} and \eqref{end-3} back into \eqref{end-1}.
\end{proof}

We now pass to the second large deviations principle.
Consider the logarithmic moment generating function of $L(n)/n$:
\[
\widetilde{\Lambda}_n(\lambda)
=\ln\,\E\exp\left\{\lambda\, L(n)/n\right\},\quad \lambda\in\mathbb{R},
\]
and define its cumulant by
\begin{equation}
\label{cumdef2}
\widetilde{\Lambda}(\lambda)
:=\lim_{n\rightarrow \infty}\,\frac{1}{n}\, \Lambda_n(\lambda n).
\end{equation}
It is again Theorem \ref{proposition:Laplace-transform}
that is responsible for the existence of the cumulant:
\begin{proposition}
The limit in \eqref{cumdef2} exists and is given by the formula
\[
\widetilde{\Lambda}(\lambda)=
\begin{cases}
	\lambda-\ln(1/p),& \lambda\geq\ln(1/p),\\
	0,& \lambda<\ln(1/p).
\end{cases}
\]
\end{proposition}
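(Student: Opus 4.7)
The plan is to observe that
\[
\Lambda_n(\lambda n) = \ln \E \exp\{\lambda L(n)\},
\]
so that $\frac{1}{n}\Lambda_n(\lambda n)$ is nothing but $\frac{1}{n}\ln\E e^{\lambda L(n)}$, and then read off the limit from the three regimes of Theorem~\ref{proposition:Laplace-transform}. So the proposition is essentially a rewriting of that theorem on a different scale.

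I will split into the same three cases as Theorem~\ref{proposition:Laplace-transform}. In the supercritical regime $\lambda > \ln(1/p)$, part (iii) gives directly $\ln \E e^{\lambda L(n)} \sim (\lambda-\ln(1/p))\,n$, so dividing by $n$ yields the limit $\lambda-\ln(1/p)$. In the critical case $\lambda=\ln(1/p)$, part (ii) gives $\ln \E e^{\lambda L(n)} \sim 2\lambda\,\log_{1/p}n = O(\ln n)$, which divided by $n$ tends to $0$; this matches the stated value $\lambda-\ln(1/p)=0$. In the subcritical case $\lambda<\ln(1/p)$ with $\lambda\neq 0$, part (i) gives $\ln \E e^{\lambda L(n)} \sim \lambda\,\log_{1/p}n$, again of order $\ln n$, and the ratio with $n$ tends to $0$. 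The case $\lambda=0$ is trivial since $\E e^{0\cdot L(n)}=1$.

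Once these three limits are collected, the piecewise formula
\[
\widetilde{\Lambda}(\lambda)=
\begin{cases}
\lambda-\ln(1/p), & \lambda\geq \ln(1/p),\\
0, & \lambda<\ln(1/p),
\end{cases}
\]
follows at once, and in particular the limit exists for every $\lambda\in\R$.

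There is essentially no real obstacle here: the work has all been done in Theorem~\ref{proposition:Laplace-transform}. The only mild subtlety is the critical value $\lambda=\ln(1/p)$, where one might worry that the $2\lambda$ prefactor in regime (ii) could contribute in the $n$-scaling; but since that asymptotic lives on the $\log n$ scale, it washes out after division by $n$ and joins smoothly with the supercritical branch at $0$.
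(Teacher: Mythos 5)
Your proposal is correct and coincides with the paper's own (essentially one-line) argument: the paper likewise obtains this proposition as a direct consequence of Theorem~\ref{proposition:Laplace-transform}, noting that $\widetilde{\Lambda}_n(\lambda n)=\ln\E e^{\lambda L(n)}$ so that the three regimes of that theorem, divided by $n$, give exactly the stated piecewise formula (the subcritical and critical contributions being $O(\ln n)$ and hence vanishing). Your remark about the critical case washing out on the $n$-scale is exactly the right observation.
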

An easy calculation shows that
\[
\sup_{\lambda \in \R} [\lambda x - \widetilde \Lambda(\lambda)]
=
\widetilde \Lambda^*(x)=
\begin{cases}
	+\infty,& x<0,\\
	x\ln(1/p),& 0\leq x\leq1,\\
	+\infty,& x>1.
\end{cases}
\]
which is the function announced in \eqref{rate-function-second}.
The proof of Corollary \ref{th-second:longest-head-run}
now proceeds along the same lines as that
of Corollary \ref{th:longest-head-run} and is therefore omitted.

\section{Exponential functionals}
\label{subsec:proof-cor-third}
The proof of Corollary \ref{th-third:general-Laplace}
is straightforward and follows from
Varadhan's integral lemma (cf.\ \cite[Section 4.3]{Dembo-Zeitouni-2009}).

We next verify that the function $f(x)=c\cdot x^\alpha$,
$0<\alpha<1,$ satisfies the condition (\ref{A1}) in Corollary
\ref{th-third:general-Laplace}. Without loss of generality, we assume $c>0$ and
obtain
\begin{multline*}
\frac{1}{\ell(n)}\ln\E\left[\exp\left(\ell(n)
f(\frac{L(n)}{\ell(n)})\right);\,
f(\frac{L(n)}{\ell(n)})\geq m\right]
\\
\begin{aligned}
&=\frac{1}{\ell(n)}\ln\E\left[\exp\left(c \ell(n)
(\frac{L(n)}{\ell(n)})^\alpha\right);\,
(\frac{L(n)}{\ell(n)})^\alpha> m \right]
\\
&= \frac{1}{\ell(n)}\ln \sum_{k=0}^\infty
\E\left[\exp\left(c \ell(n)
(\frac{L(n)}{\ell(n)})^\alpha\right);\,
m+k<(\frac{L(n)}{\ell(n)})^\alpha\leq m+(k+1) \right]
\\
&\leq\frac{1}{\ell(n)}\ln \sum_{k=0}^\infty
e^{c \ell(n) (m+k+1)} \,\P\left(m+k<(\frac{L(n)}{\ell(n)})^\alpha\right)
\\
&=c(m+1)+\frac{1}{\ell(n)}\ln \sum_{k=0}^\infty
e^{ck\ell(n)} \P\left((m+k)^{1/\alpha}<\frac{L(n)}{\ell(n)}\right).
\end{aligned}
\end{multline*}
We now apply Lemma \ref{Lbounds} with $k=\lceil(m+k)^{1/\alpha} \ell(n)\rceil+1$
and obtain
\begin{align*}
\P\left(\frac{L(n)}{\ell(n)} > (m+k)^{1/\alpha} \right)
&= \P\left(\L(n) > \lceil \ell(n) (m+k)^{1/\alpha} \rceil \right)
\\
&= 1-\P\left(\L(n) < \lceil \ell(n) (m+k)^{1/\alpha} \rceil+1 \right)
\\
&\le 1-(1-p^{\lceil \ell(n) (m+k)^{1/\alpha} \rceil+1})^{n-\lceil \ell(n) (m+k)^{1/\alpha} \rceil}
\\
&\le (n-\lceil \ell(n) (m+k)^{1/\alpha} \rceil)
p^{\lceil \ell(n) (m+k)^{1/\alpha} \rceil+1}
\\
&\le n p^{\ell(n) (m+k)^{1/\alpha}}
\\
&= n^{1-(m+k)^{1/\alpha}}.
\end{align*}
Combining previous two estimates gives
\begin{align*}
\frac{1}{\ell(n)}& \ln  \mathbb{E}\left[\exp\left(\ell(n)
f(\frac{L(n)}{\ell(n)})\right);\,
f(\frac{L(n)}{\ell(n)})\geq m \right]
\\
&\leq c(m+1)+\frac{1}{\ell(n)}\ln  \left(\sum_{k=0}^\infty
e^{ck\ell(n)}  n^{-(m+k)^{1/\alpha}+1}\right)
\\
&=c(m+1)+\frac{1}{\ell(n)}\ln  \left(\sum_{k=0}^\infty
n^{ck/\ln(1/p)}  n^{-(m+k)^{1/\alpha}+1}\right)
\\
&\leq c(m+1)+\frac{1}{\ell(n)}\ln  \left(\sum_{k=0}^\infty
n^{ck/\ln(1/p)}  n^{-(m^{1/\alpha}+k^{1/\alpha})/2+1}\right)
\\
&=c(m+1)-\frac{(m^{1/\alpha}-1)
\ln(1/p)}{2}+\frac{1}{\ell(n)}\ln  \left(\sum_{k=0}^\infty
n^{ck/\ln(1/p)}  n^{-k^{1/\alpha}/2}\right)
\\
&\rightarrow c(m+1)-\frac{(m^{1/\alpha}-1)  \ln(1/p)}{2},\quad \text{ as
}n\rightarrow\infty \quad(\text{since }\alpha<1).
\end{align*}
Therefore (\ref{A1}) follows by taking $m\rightarrow\infty.$

With $f(x) = tx^\alpha$, $t > 0$, $0<\alpha<1$,
we have
\[
\max_{x \in \R} \{f(x)-\Lambda^*(x)\}
= \max_{x \ge 1} \{tx^\alpha-\lambda_p (x-1)\},
\]
where $\lambda_p:=\ln(1/p)$, for brevity.
There are two cases:
\\
{\em Case 1:} $t > \ln(1/p)/\alpha$. Then the maximum above
is achieved at $x^* = (\alpha t/\lambda_p)^{1/(1-\alpha)}$
and equals
\[
t^{\frac{1}{1-\alpha}}\,
\lambda_p^{\frac{-\alpha}{1-\alpha}}\,C_\alpha+\lambda_p,
\]
where $C_\alpha$ is the positive quantity
\[
C_\alpha = \alpha^{\frac{\alpha}{1-\alpha}} - \alpha^{\frac{1}{1-\alpha}}.
\]
Since $\ell(n) f(L(n)/\ell(n)) = t \ell(n)^{1-\alpha} L(n)^\alpha
= t \lambda_p^{\alpha-1} (\ln n)^{1-\alpha} L(n)^\alpha$,
Corollary \ref{th-third:general-Laplace}
gives
\[
\ln \E\left[ e^{t \lambda_p^{\alpha-1} (\ln n)^{1-\alpha} L(n)^\alpha}\right]
\sim \frac{\ln n}{\lambda_p}
\left(t^{\frac{1}{1-\alpha}}\,
\lambda_p^{\frac{-\alpha}{1-\alpha}}\,C_\alpha+\lambda_p\right)
= (\ln n) \left(t^{\frac{1}{1-\alpha}}\,
\lambda_p^{\frac{-1}{1-\alpha}}\,C_\alpha+1\right).
\]
\\
{\em Case 2:} $t \le \ln(1/p)/\alpha$.
Then the maximum is achieved at $x^*=1$ and equals $t$.
Hence
\[
\ln \E\left[ e^{t \lambda_p^{\alpha-1} (\ln n)^{1-\alpha} L(n)^\alpha}\right]
\sim \frac{t}{\lambda_p} \ln n.
\]
The expressions become neater upon a change of variables
and are summarized thus
\begin{corollary}
For all $t >0$, for all $0<\alpha<1$, as $n \to \infty$,
\[
\ln \E\left[ e^{t\, (\ln n)^{1-\alpha} L(n)^\alpha}\right]
\sim
\begin{cases}
\displaystyle
\frac{t}{\ln^\alpha(1/p)} \ln n,
& \text{ if } t\le \frac{\ln^\alpha(1/p)}{\alpha}
\\[4mm]
\displaystyle
\left[ \left(\frac{t}{\ln^\alpha(1/p)}\right)^{\frac{1}{1-\alpha}}
\left(\alpha^{\frac{\alpha}{1-\alpha}} - \alpha^{\frac{1}{1-\alpha}}\right)+1
\right] \ln n ,
& \text{ otherwise.}
\end{cases}
\]
\end{corollary}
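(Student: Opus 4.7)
My plan is to derive the corollary by specializing Corollary \ref{th-third:general-Laplace}(I) to the one-parameter family $f(x) = s x^\alpha$, with $s > 0$ and $0 < \alpha < 1$, and then matching coefficients through a change of variable. Abbreviate $\lambda_p := \ln(1/p)$ so that $\ell(n) = (\ln n)/\lambda_p$. A direct computation gives
\[
\ell(n)\, f\bigl(L(n)/\ell(n)\bigr)
= s\, \ell(n)^{1-\alpha} L(n)^\alpha
= s\, \lambda_p^{\alpha-1} (\ln n)^{1-\alpha} L(n)^\alpha,
\]
so the substitution $s = t\, \lambda_p^{1-\alpha}$ makes this expression coincide with the exponent $t\,(\ln n)^{1-\alpha} L(n)^\alpha$ appearing in the statement.

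Condition (\ref{A1}) for $f(x) = sx^\alpha$ has been verified in the computation preceding the corollary (the argument there does not use the specific value of the positive coefficient), so Corollary \ref{th-third:general-Laplace}(I) yields
\[
\ln\, \E\bigl[e^{t (\ln n)^{1-\alpha} L(n)^\alpha}\bigr]
\sim \ell(n) \cdot M(s),
\qquad
M(s) := \max_{x \ge 1}\bigl[s x^\alpha - \lambda_p(x-1)\bigr],
\]
where the restriction to $x \ge 1$ reflects $\Lambda^*(x) = +\infty$ for $x < 1$. The objective is strictly concave on $[1,\infty)$, and its derivative vanishes at the unique critical point $x^* = (s\alpha/\lambda_p)^{1/(1-\alpha)}$, which lies in $[1,\infty)$ precisely when $s\alpha \ge \lambda_p$.

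Two regimes therefore arise. If $s < \lambda_p/\alpha$, the derivative at $x = 1$ equals $s\alpha - \lambda_p < 0$, so concavity forces $M(s) = s$. If $s \ge \lambda_p/\alpha$, substituting $x^*$ and simplifying yields
\[
M(s) = s^{1/(1-\alpha)}\, \lambda_p^{-\alpha/(1-\alpha)}\, C_\alpha + \lambda_p,
\qquad
C_\alpha := \alpha^{\alpha/(1-\alpha)} - \alpha^{1/(1-\alpha)}.
\]
Multiplying by $\ell(n)/\ln n = 1/\lambda_p$ and substituting back $s = t\, \lambda_p^{1-\alpha}$, one checks that $s/\lambda_p = t/\lambda_p^\alpha = t/\ln^\alpha(1/p)$, so $M(s)/\lambda_p = s/\lambda_p = t/\ln^\alpha(1/p)$ in the first regime and $M(s)/\lambda_p = (t/\ln^\alpha(1/p))^{1/(1-\alpha)}\, C_\alpha + 1$ in the second; the threshold $s = \lambda_p/\alpha$ becomes $t = \ln^\alpha(1/p)/\alpha$. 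I anticipate no genuine obstacle, since everything reduces to Corollary \ref{th-third:general-Laplace}(I) plus a one-variable concave optimization; the only step demanding attention is the bookkeeping of the various powers of $\lambda_p$ under the change of variables.
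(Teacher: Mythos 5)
Your proposal is correct and follows essentially the same route as the paper: verify condition (\ref{A1}) for $f(x)=cx^\alpha$ (which the paper does for a general positive coefficient), apply Corollary \ref{th-third:general-Laplace}(I), solve the concave maximization over $x\ge 1$ in the two regimes determined by whether the critical point $(\alpha s/\lambda_p)^{1/(1-\alpha)}$ lies in $[1,\infty)$, and then rescale the coefficient by $\lambda_p^{1-\alpha}$; the only cosmetic difference is that you perform the change of variables $s=t\lambda_p^{1-\alpha}$ at the outset whereas the paper does it at the end.
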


\section{An applications to inference}
\label{APPLsec}
Let us consider a classical problem in confidence intervals.
Let $\{X_k\}_{1\leq k\leq n}$ be an i.i.d.\ random sample from a
Bernoulli population $X$ with
$\mathbb{P}(X=1)=p$ and $\mathbb{P}(X=0)=1-p, \,\,0<p<1.$ Our aim in this
section is to construct a $100(1-\alpha)\%$ confidence interval for $p$ with a
given significance level $\alpha,$ when $p$ is close to $1$ (or $0$) and $n$ is
not very large.

The normal approximation to the binomial random variable
$K:=\sum_{i=1}^nX_i$ does not work well when $p$ is close to $1$ (or $0$).
Nevertheless, there are several alternatives in this case:
Wilson's score interval \cite{Wilson-1927}, the Clopper-Pearson interval
\cite{Clopper-Pearson-1934}, and others (such as Jeffreys interval,
Agresti-Coull Interval etc.). In this section, we propose another confidence
interval based on the longest head run $L(n)$ with the help of Corollary
\ref{th:longest-head-run}. It turns out that this type of confidence intervals
works much better than others.

To construct such confidence intervals, on one hand it comes from
Corollary \ref{th:longest-head-run} that, for each $x>0,$
$$\lim_{n\rightarrow\infty}\frac{1}{\log_{1/p}n}\ln\,\,\mathbb{P}\left(\frac{L(n)}{\log_{1/p}n}\geq1+x\right)=-x\cdot\ln(1/p).$$
On the other hand, Lemma \ref{lemma:borrowed-from-wu} below states that, for
every $0<x<1,$
$$\lim_{n\rightarrow\infty}\frac{1}{\log_{1/p}n}\ln\,\,\left[-\ln\,\,\mathbb{P}\left(\frac{L(n)}{\log_{1/p}n}\leq1-x\right)\right]=x\cdot\ln(1/p).$$
Combining these two asymptotics gives a $100(1-\alpha)\%$ confidence interval of
$p$ as follows:
\begin{align}
\label{confidence-interval}
I_p=\left(\exp\left\{-\frac{\ln(n)-\ln(\alpha/2)}{\widehat{L}(n)}\right\},\quad\exp\left\{-\frac{\ln(n)-\ln(-\ln(\alpha/2))}{\widehat{L}(n)}\right\}\right)
\end{align}
where $\widehat{L}(n)$ is a point estimate of $L(n).$
A reasonable point estimate of $L(n)$ is
\[
\widehat{L}(n)=L_{\text{obs}}(n)-\left[\log_{1/\widehat{p}}(1-\widehat{p})+\log_{1/\widehat{p}}(e^\gamma)-\frac{1}{2}\right]
\]
with $L_{\text{obs}}(n)$ being the observed longest head run in $n$ trials, and
$\widehat{p}:=k/n$ being the sample proportion. To see this, firstly we know that in
the long run $L(n)/\log_{1/p}n\rightarrow1,$ therefore we want that an estimate
satisfies $\mathbb{E}\widehat{L}(n)\rightarrow \log_{1/p}n.$ Secondly, it follows
from the mean \eqref{june-10-mean} that
$\mathbb{E}\widehat{L}(n)=\log_{1/p}n+\left[\log_{1/p}(1-p)+\log_{1/p}(e^\gamma)\right]-\left[\log_{1/\widehat{p}}(1-\widehat{p})+\log_{1/\widehat{p}}(e^\gamma)\right]+\varepsilon(n),$
which is quite close to $\log_{1/p}n.$ This explains that
\eqref{confidence-interval} is an appropriate confidence interval for $p.$

Below we have simulations for the derived confidence interval $I_p$ in
\eqref{confidence-interval} when $p$ is close to $1$ (the case $p$ is close to
$0$ can be similarly handled), and we make several comparisons with Wilson score
intervals and Clopper-Pearson intervals. Based on the simulations
(see Table 1),
it is evident that our confidence interval
\eqref{confidence-interval} works much better than others when $p$ is close to
$1$ and $n$ is not very large.
In Table 2,
for larger $p$ and $n$ we
apply the normal approximation to the Binomial random variable. In this case it
turned out that lower bound of the normal approximation intervals works better
than Wilson score intervals and Clopper-Pearson intervals, but the upper bound
does not. In any case, our confidence interval \eqref{confidence-interval} still
works the best among them.

\begin{center}
\small
{\bf Table 1.} Wilson score interval (WS) --- Clopper-Pearson interval (CP)
--- Longest run interval (LR)     
\\[2mm]
\begin{tabular}{r | c c c c c}
\hline
& & $p=0.9500$ & $n=200$ & $\alpha=0.05$ & \\
\hline
& $\widehat{p}=0.9650$ & $\widehat{p}=0.9450$ & $\widehat{p}=0.9600$ & $\widehat{p}=0.9500$ &
$\widehat{p}=0.9700$\\
WS: & $(0.9295, 0.9829)$ & $(0.9042, 0.9690)$ & $(0.9231, 0.9796)$ & $(0.9104,
0.9726)$ & $(0.9361, 0.9862)$\\
CP: & $(0.9292, 0.9858)$ & $(0.9037, 0.9722)$ & $(0.9227, 0.9826)$ & $(0.9100,
0.9758)$ & $(0.9358, 0.9889)$\\
LR: & $(0.9329, 0.9696)$ & $(0.9145, 0.9611)$ & $(0.9243, 0.9656)$ & $(0.9325,
0.9694)$ & $(0.9484, 0.9767)$\\
\hline
\end{tabular}
\begin{tabular}{r | c c c c c}
\hline
& & $p=0.98$ & $n=200$ & $\alpha=0.05$ & \\
\hline
& $\widehat{p}=0.9800$ & $\widehat{p}=0.9850$ & $\widehat{p}=0.9700$ & $\widehat{p}=0.9800$ &
$\widehat{p}=0.9750$\\
WS: & $(0.9497, 0.9922)$ & $(0.9568, 0.9949)$ & $(0.9361, 0.9862)$ & $(0.9497,
0.9922)$ & $(0.9428, 0.9893)$\\
CP: & $(0.9496, 0.9945)$ & $(0.9568, 0.9969)$ & $(0.9358, 0.9889)$ & $(0.9496,
0.9945)$ & $(0.9426, 0.9918)$\\
LR: & $(0.9657, 0.9846)$ & $(0.9751, 0.9889)$ & $(0.9578, 0.9810)$ & $(0.9703,
0.9867)$ & $(0.9606, 0.9821)$\\
\hline
\end{tabular}
\end{center}

\begin{center}
\small
{\bf Table 2.} Wilson score interval (WS) --- Clopper-Pearson interval (CP)
--- Longest run interval (LR)\\
--- Normal approximation (N)\\[2mm]    
\begin{tabular}{r | c c c c c}
\hline
& & $p=0.9950$ & $n=1000$ & $\alpha=0.05$ & \\
\hline
& $\widehat{p}=0.9950$ & $\widehat{p}=0.9940$ & $\widehat{p}=0.9950$ & $\widehat{p}=0.9960$ &
$\widehat{p}=0.9960$\\
N: & $(0.9906, 0.9994)$ & $(0.9892, 0.9988)$ & $(0.9906, 0.9994)$ & $(0.9921,
0.9999)$ & $(0.9921, 0.9999)$\\
WS: & $(0.9883, 0.9979)$ & $(0.9870, 0.9972)$ & $(0.9883, 0.9979)$ & $(0.9898,
0.9984)$ & $(0.9898, 0.9984)$\\
CP: & $(0.9884, 0.9984)$ & $(0.9870, 0.9978)$ & $(0.9884, 0.9984)$ & $(0.9898,
0.9989)$ & $(0.9898, 0.9989)$\\
LR: & $(0.9915, 0.9955)$ & $(0.9909, 0.9952)$ & $(0.9919, 0.9957)$ & $(0.9941,
0.9969)$ & $(0.9938, 0.9967)$\\
\hline
\end{tabular}
\end{center}

\section{Open problems}
\label{OPENsec}
A problem for future research
would be the study of a large deviation principle for
the random-dimensional random vector $R(n)=
\left(R_1(n), R_2(n), \ldots, R_{L(n)}(n)\right)$
of counts of successive runs of all lengths. That is,
let $R_\ell(n)$ be the number of head runs of length $\ell$
up to the $n$-th coin toss. Distributional relations for $R(n)$
were studied in \cite{Holst-Konstantopoulos-2015}.

It would further be interesting to obtain large large deviation principles
for longest runs in a Markov chain. In other words, assume that $(X_n)$
is a Markov chain with finite (or countable) state space $S$ and let
$L(x,n)$ be the longest sojourn time at a state $x \in S$ before time $n$.
Although there are Stein-Chen type estimates
\cite{Novak-1992,Zhang-Wu-2013} for the distribution
of such quantities, the errors in these estimates are too big for the
study of a large deviation principle.
We would like to obtain an LDP for $L(x,n)$ or for the vector
$(L(x,n), \, x \in S)$ which would, by contraction principle,
give us an LDP for $L(n) := \sup_{x \in S} L(x,n)$.

\footnotesize


\begin{thebibliography}{10}

\bibitem{Balakrishnan-Koutras-2002}
N.\ Balakrishnan and M.\ Koutras (2002).
{\em Runs and Scans with Applications.}
Wiley, New York.

\bibitem{Clopper-Pearson-1934}
C.\ Clopper and E.\ Pearson (1934).
The use of confidence or fiducial limits illustrated in
the case of the binomial.
\textit{Biometrika} {\bf 26}, 404-413.

\bibitem{Dembo-Zeitouni-2009}
A.\ Dembo, O.\ Zeitouni (2009).
{\em Large Deviations Techniques and Applications.}
Springer-Verlag.

\bibitem{ER70}
P.\ Erd\H{o}s and A.\ R\'enyi (1970).
On a new law of large numbers.
{\em J.\ Analyse Math.} {\bf 22}, 103-111.

\bibitem{Erdos-Revesz-1975}
P.\ Erd\"{o}s and P. R\'{e}v\'{e}sz (1975).
On the length of the longest head-run,
\textit{Topics in Information Theory} {\bf 16}, 219-228.

\bibitem{FEL68}
W.\ Feller (1968).
{\em An Introduction to Probability Theory and its Applications, Vol.\ I.}
Third edition, John Wiley, New York.

\bibitem{Fu-Wang-Lou-2003}
J.\ Fu, L.\ Wang and W.\ Lou (2003).
On exact and large deviation approximation for the
distribution of the longest run in a sequence of two-state Markov dependent
trials. \textit{J.\ Appl.\ Probability} {\bf 40}, No.\ 2, 346-360.

\bibitem{Holst-Konstantopoulos-2015}
L.\ Holst, T.\ Konstantopoulos (2015).
Runs in coin tossing: a general approach for
deriving distributions for functionals.
To appear in \textit{Journal of Applied Probability}.

\bibitem{Mao-Wang-Wu-2015}
Y.\ Mao, F.\ Wang and X.\ Wu (2015).
Large deviation behavior for the longest head run in an IID Bernoulli sequence.
\textit{J.\ Theoretical Probability} {\bf 28}, 259-268.

\bibitem{Novak-1992}
S.\ Novak (1992).
Longest runs in a sequence of $m$-dependent random variables.
\textit{Prob.\ Theory and Related Fields} {\bf 91}, 269-281.

\bibitem{Philippou-1985}
A.\ Philippou and F.\ Makri (1985).
Longest success runs and Fibonacci-type polynomials.
\textit{The Fibonacci Quart.} {\bf 23}, 338-346.

\bibitem{Renyi-1970}
A.\ R\'{e}nyi (1970).
{\em Foundations of Probability.} Holden-Day.

\bibitem{VM1921}
R.\ von Mises (1921).
Zur Theorie der Iterationen.
{\em Zeitschr.\ f\"ur angew.\ Mathem.\ und Mechan.} {\bf 1}, 298-307.

\bibitem{VM}
R.\ von Mises (1981).
{\em Probability, Statistics and Truth.}
Dover, New York.
(Originally published in German in 1928;
this is a translation from the third 1957 German edition.)

\bibitem{Sandmann-Schonbucher-Sondermann-2002}
K.\ Sandmann, P.\ Sch\"{o}nbucher and D.\ Sondermann (eds) (2002).
{\em Advances in Finance and Stochastics:
Essays in Honour of Dieter Sondermann}.
Springer-Verlag.

\bibitem{Schilling-1990}
M.\ Schilling (1990).
The longest run of heads,
\textit{College Math.\ J.} {\bf 21}, No.\ 3, 196-207.

\bibitem{Wentzell-1990}
A.\ Wentzell (1990).
{\em Limit Theorems on Large Deviations for Markov Stochastic Processes.}
Springer-Verlag.

\bibitem{Wilson-1927}
E.\ Wilson (1927).
Probable inference, the law of succession, and statistical inference.
\textit{J.\ Amer.\ Stat.\ Assoc.} {\bf 22}, 209-212.

\bibitem{Zhang-Wu-2013}
Y.\ Zhang and X.\ Wu (2013).
Some results associated with the longest run in a strongly ergodic Markov chain.
\textit{Acta Mathematica Sinica} {\bf 29}, No.\ 10, 1939-1948.


\end{thebibliography}
\end{document}